\newtheorem{theorem}{\sc Theorem}[section]
\newtheorem{lemma}[theorem]{\sc Lemma}
\newtheorem{proposition}[theorem]{\sc Proposition}
\newtheorem{corollary}[theorem]{\sc Corollary}
\newtheorem{rem}[theorem]{\sc Remark}
\date{}
\title[Maximal covers]{Maximal covers of finite groups}
\author[Lima]{Igor Lima}
\address{Departamento de Matem\'atica, Universidade de Bras\'ilia,
Brasilia-DF, 70910-900 Brazil }
\email{igor.matematico@gmail.com}
\author[Bastos]{Raimundo Bastos}
\address{Departamento de Matem\'atica, Universidade de Bras\'ilia,
Brasilia-DF, 70910-900 Brazil }
\email{bastos@mat.unb.br}
\thanks{The second author was supported by Funda\c c\~ao de Apoio \`a Pesquisa do Distrito Federal (FAPDF), Brazil, Grant: 0193.001344/2016.}
\author[Rog\'erio]{Jos\'e R. Rog\'erio}
\address{Departamento de Matem\'atica, Universidade Federal do Cear\'a, Fortaleza - CE, Brazil}
\email{roberiorogerio@yahoo.com.br}
\subjclass[2010]{20D10, 20D30, 20E34, 20D50}
\keywords{Solvable groups, Finite Groups, Structure Theorem, Covering groups}
\begin{document}

\begin{abstract} 
Let $\lambda(G)$ be the maximum number of subgroups in an irredundant covering of the finite group $G$. We prove that if $G$ is a group with $\lambda(G) \leqslant 6$, then $G$ is supersolvable. We also describe the structure of the groups $G$ with $\lambda(G)=6$. Moreover, we show that if $G$ is a group with $\lambda(G) \leqslant 30$, then $G$ is solvable.   
\end{abstract}

\maketitle

\section{Introduction}

Throughout this paper all groups are finite. Let $G$ be a group. The subgroups $X_{1},X_{2},...,X_{n}$, $n\geq 2$, form a \textit{covering} of $G$ if 
\begin{equation*}
\bigcup_{i=1}^{n}X_{i}=G\,.
\end{equation*}%
A covering is said to be \textit{irredundant} if none of the subgroups can
be removed, i.e. 
\begin{equation*}
X_{i}\nsubseteq \bigcup_{j\neq i}^{n}X_{j}\,,
\end{equation*}%
for all $i=1,2,...,n$. If $\{X_i\}_{i=1}^{n}$ is an irredudant covering of $G$ by subgroups, it is natural to ask what information about $G$ can be deduced from properties of the subgroups $X_i$ and the size $n$. 

The study of the coverings of a group by its subgroups dates back to 1926, when G. Scorza \cite{S} proved that a
group $G$ admits an irredundant covering by $3$ subgroups if and only if it
has a normal subgroup $N$ such that $G/N\cong C_{2}\times C_{2}$. Since
then, several authors have been working on problems involving group
coverings in various settings. J. Cohn \cite{Cohn} defined $\sigma (G)$ as
the minimal number of subgroups in an irredundant covering of the non-cyclic
group $G$. He outlined the basic properties of $\sigma (G)$, classified the
groups with $\sigma (G)=3,4,5$ and observed other intriguing properties of
this function, for instance, that apparently there were no groups $G$ for
which $\sigma (G)=7$. He went further and conjectured that if $G$ is a
non-cyclic soluble group then $\sigma (G)=p^{a}+1$, with $p$ prime
and $a\in \mathbb{N}$. Both of these conjectures were later proved by M.
Tomkinson in \cite{Tomk1}. More recently, A. Abdollahi et al. \cite{AAAH} gave
a complete classification of the groups with $\sigma(G)=6$, while J. Zhang 
\cite{JZhang} proved the non-existence of finite groups with $\sigma
(G)=11,13 $ and showed that $\sigma \bigl(PSL(2,7)\bigr)=15$. Other papers
that contributed to this theory of minimal coverings over the last decades
are \cite{Abdo1}, \cite{Abdo2}, \cite{BFS}, \cite{Bry1} and \cite{Marioti}. 

In \cite{B}, R. Brodie considered a slightly different problem. He classified the groups $G$ that have \textit{exactly one
irredundant covering} by proper subgroups (see also \cite{JR,JRR} for more details). In \cite{JRR}, appeared another extremal variant of the covering problem. For a group $G$, define the function $\lambda(G)$ as the \textit{maximum}
number of subgroups in an irredundant covering of $G$. In the same paper, the third named author proved the basic properties of $\lambda(G)$, also presented a description of groups with $\lambda(G) =3,4,5$ and study the structure of groups $G$ admitting only one-sized coverings, i.e., such that $\sigma(G) = \lambda(G)$. In \cite{GL}, M. Garonzi and A. Lucchini gave a complete description of groups admitting only one-sized covering.  

In the present paper we want to study the structure of group $G$ in terms of the number $\lambda(G)$. We obtain the following related results. \\

{\noindent}{\bf Theorem A.} {Let $G$ be a group with $\lambda(G) \leqslant 6$. Then $G$ is supersolvable.} \\ 

Note that groups with $\lambda(G)=7$ can be non-supersolvable. For instance, $\lambda(A_4)=7$. In the next two results, we classify the structure of groups $G$ with $\lambda(G)=6$. \\

{\noindent}{\bf Theorem B.}{ Let $G$ be a nilpotent group with $\lambda(G) = 6$. Then $G \cong P \times C_n$, where $P$ is a Sylow $p$-subgroup and $p$ does not divides $n$. Moreover, $G$ is isomorphic to one of the following: }
\begin{itemize}
 \item[$(a)$] $C_4 \times C_4 \times C_n$, where $n$ is an odd positive integer;
 \item[$(b)$] $C_5 \times C_5 \times C_n$, where $5$ does not divide $n$;
 \item[$(c)$] $C_3 \times C_9 \times C_n$, where $3$ does not divide $n$;
 \item[$(d)$] $R_3 \times C_n$, where $R_3 \cong \langle a,b \mid a^9=1=b^3, a^b=a^4\rangle$ and $3$ does not divides $n$; 
 \item[$(e)$] $G \cong C_2 \times C_{16} \times C_n$, where n is an odd positive integer;
 
 \item[$(f)$] $G \cong T_5 \times C_n$, where $T_5 = \langle a,b \mid a^{16}=1=b^2, \ a^b = a^9\rangle$ and $n$ is an odd positive integer. 
\end{itemize}

{\noindent}{\bf Theorem C.}{ Let $G$ be a non-nilpotent group with $\lambda(G) = 6$. Then $G$ is isomorphic to one of the following: }
\begin{itemize}
\item[(a)] $\langle a,b \mid \ a^5 = 1 = b^n, \ a^b = a^4\rangle$, where $4^n \equiv 1$ $( \text{mod \ }5)$ and $5 \nmid n$;
\item[(b)] $\langle a,b \mid \ a^5 = 1 = b^n, \ a^b = a^2\rangle$, where $2^n \equiv 1$ $(\text{mod \ } 5)$ and $5 \nmid n$ and $4 \mid n$; 
\item[(c)] $\langle a,b \mid \ a^5 = 1 = b^n, \ a^b = a^3\rangle$, where $3^n \equiv 1$ $(\text {mod \ } 5)$, $5 \nmid n$ and $4 \mid n$;
\item[(d)] $\langle a,b \mid a^{3n} = 1 = b^{6n}, a^b = a^{n+1}, a^3 = b^6 \rangle$, where $3 \mid n$ or $3 \mid (n+2)$;
\item[(e)] $\langle a,b \mid a^{3n} = 1 = b^{6n}, a^b = a^{2n+1}, a^3 = b^6 \rangle$, where $3 \mid n$ or $3 \mid (n+1)$; 
\item[(f)] $\langle a,b \mid a^{3n} = 1 = b^{6n}, a^3 = b^6, a^b = a^{n+1}\rangle$, where $2 \mid n$, $3 \mid n$ or $3 \mid (n+2)$;
\item[(g)] $\langle a,b \mid a^{3n} = 1 = b^{6n}, a^3 = b^6, a^b = a^{2n+1}\rangle$, where $2 \mid n$ and $3 \nmid (n+2)$.
\end{itemize} 

In the next result we provides a solubility criterion for a group $G$ in terms of the $\lambda(G)$. \\

{\noindent}{\bf Theorem D.} {Let $G$ be a group with $\lambda(G) \leqslant 30$. Then $G$ is solvable.} \\ 

Note that groups with $\lambda(G)=31$ can be non-solvable. For instance, $\lambda(A_5)=31$ (see also Remark \ref{rem.solvable} for more examples). 

The paper is organized as follows. In the next section we collect results in the context of irredundant covers of groups. In the third section we present the proof of Theorem A, where we give a supersolubility criterion for groups in terms of the $\lambda(G)$. Section 4 is devoted to the description of groups with exactly $\lambda(G)=6$ (Theorems B and C). In Section 5 we present two solvability criteria for groups in terms of the number of cyclic subgroups of $G$ (Theorem D and Corollary \ref{cor.31}).      
 	 	 
\section{Preliminary results}
We say that the subgroup $\left\langle x\right\rangle $\ of the group $G$\
is \textit{maximal cyclic} if there is no cyclic subgroup $\langle y\rangle 
$ in $G$ such that $\langle x\rangle <\langle y\rangle $. In the first lemma we collect some results which can be found in \cite{JRR}.

\begin{proposition} (\cite[Propositions 4 and  5]{JRR})
\label{prop.basic} Let $G$ and $W$ be finite groups.

\begin{enumerate}
\item[(i)] If $H_1, \ldots, H_n$ be the maximal cyclic subgroups of $G$, then $
\cup_{i=1}^{n} H_{i}$ is an irredundant covering of $G$ and $\lambda (G)=n$. Moreover, $\cup_{i=1}^{n}
H_i$ is the unique covering by proper subgroups
of $G$ if and only if $H_i$ is a maximal subgroup of 
$G$, $i=1,\ldots, n$
\item[(ii)] For any subgroup $H\leq G,~$we have $\lambda (H)\leq \lambda (G).$
\item[(iii)] If $M \trianglelefteq G$, then $\lambda \big(\frac{G}{M}\big)%
\leq \lambda (G)$. For $N=\bigcap\limits_{i=1}^{\lambda }H_{i}$, where $%
\lambda =\lambda (G)$ and $H_{1},\,H_{2},...,\,H_{\lambda}$ are the maximal cyclic
subgroups of $G$, we have $N$ is a central subgroup of $G$ and  $\lambda \big(\frac{G}{N}\big)=\lambda (G)$.
\item[(iv)] If each maximal cyclic is a maximal subgroup, then $\lambda %
\big(\frac{G}{\Phi (G)}\big)=\lambda (G)$, where $\Phi (G)$ is the Frattini
subgroup of $G$.
\item[(v)] We have $\lambda (G\times W)\geq \lambda (G)\lambda (W)$.
Equality is true if $\gcd\left( \left\vert G\right\vert ,\left\vert
W\right\vert \right) =1.$
\end{enumerate}
\end{proposition}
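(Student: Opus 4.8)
The plan is to establish the maximal-cyclic characterization in (i) first, since every other item reduces to counting maximal cyclic subgroups. For (i), the starting observation is that each $g \in G$ lies in $\langle g\rangle$, which is contained in at least one maximal cyclic subgroup, so $\bigcup_i H_i = G$. Irredundancy is immediate: a generator $h_i$ of $H_i = \langle h_i\rangle$ cannot lie in any other $H_j$, for $h_i \in H_j$ would force $H_i = \langle h_i\rangle \le H_j$ and hence $H_i = H_j$ by maximality. To see that no irredundant covering can be larger, I would take an arbitrary irredundant covering $\{X_k\}_{k=1}^m$, choose for each $k$ a witness $y_k \in X_k \setminus \bigcup_{l\neq k} X_l$, and send $y_k$ to a maximal cyclic subgroup containing it. This assignment is injective: if $y_k, y_{k'}$ lay in a common maximal cyclic $\langle h\rangle$, then $h$ itself lies in some $X_l$, forcing $\langle h\rangle \le X_l$ and hence both witnesses into $X_l$, so $l = k = k'$. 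Thus $m \le n$, giving $\lambda(G) = n$. The uniqueness clause is then a generator chase: if every $H_i$ is a maximal subgroup, a generator of $H_i$ lying in a proper subgroup $Y$ forces $H_i = Y$, so the members of any covering by proper subgroups are exactly the $H_i$; conversely, if some $H_i$ fails to be maximal one enlarges it to a proper $K$ with $H_i < K < G$ and replaces $H_i$ by $K$ to produce a different covering by proper subgroups.

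For (ii) I would compare maximal cyclic subgroups directly. A maximal cyclic subgroup $K$ of $H$ is cyclic in $G$, hence lies in a maximal cyclic subgroup $M$ of $G$; the map $K \mapsto M$ is injective because two such $K, K'$ inside the same cyclic $M$ both lie in the cyclic group $M \cap H \le H$, and maximality in $H$ forces $K = M\cap H = K'$. Counting via (i) gives $\lambda(H) \le \lambda(G)$. The first half of (iii) is even softer: the maximal cyclic subgroups of $G/M$ form an irredundant covering, and taking preimages under the quotient map preserves both the covering property and irredundancy (a witness downstairs lifts to a witness upstairs), so $\lambda(G/M) \le \lambda(G)$.

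The core of the proposition, and the step I expect to be most delicate, is the equality $\lambda(G/N) = \lambda(G)$ for $N = \bigcap_i H_i$. Centrality of $N$ is quick: any $x \in N$ and any $g \in G$ both lie in some maximal cyclic $H_j \ni g$, which is abelian, so $x$ commutes with $g$. For the equality, the inequality $\le$ comes from the first half of (iii), so the work is in the reverse direction. Since $N \le H_i$ for every $i$, each $H_i/N$ is a cyclic subgroup of $G/N$, and distinct $H_i$ give distinct $H_i/N$. The crux is to show each $H_i/N$ is \emph{maximal} cyclic in $G/N$: if $H_i/N$ sat properly inside a cyclic $\langle xN\rangle$, I would lift $\langle x\rangle$ into a maximal cyclic $H_k$ of $G$; because $N \le H_k$, the image $H_k/N$ is cyclic and contains $\langle xN\rangle$, whence $H_i/N < H_k/N$ and therefore $H_i < H_k$, contradicting maximality of $H_i$. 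This yields $n$ distinct maximal cyclic subgroups of $G/N$, so $\lambda(G/N) \ge n$. Item (iv) then drops out formally: the hypothesis forces $\Phi(G) \le N$ (the Frattini subgroup is the intersection of \emph{all} maximal subgroups, a subfamily of the $H_i$), so $G/N$ is a quotient of $G/\Phi(G)$, and the chain $\lambda(G) = \lambda(G/N) \le \lambda(G/\Phi(G)) \le \lambda(G)$ obtained from (iii) collapses to equalities.

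Finally, for (v) the lower bound follows by taking irredundant coverings $\{H_i\}$ of $G$ and $\{K_j\}$ of $W$ and checking that $\{H_i \times K_j\}$ is an irredundant covering of $G \times W$ of size $\lambda(G)\lambda(W)$, with witness $(g_i, w_j)$ built coordinatewise from the individual witnesses. When $\gcd(|G|,|W|) = 1$ I would instead classify the maximal cyclic subgroups of $G\times W$ outright: coprimality makes every $\langle(g,w)\rangle$ split as $\langle g\rangle \times \langle w\rangle$, and such a product is maximal cyclic exactly when each factor is maximal cyclic in its own group, so the maximal cyclic subgroups of $G \times W$ are precisely the $H_i \times K_j$; by (i) this gives $\lambda(G\times W) = \lambda(G)\lambda(W)$.
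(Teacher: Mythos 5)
The paper itself gives no proof of this proposition---it is quoted, with attribution, from \cite[Propositions 4 and 5]{JRR}---so there is no in-paper argument to compare against; what can be checked is whether your blind proof is sound, and it is. The witness-injection argument for (i) (send each $X_k$ to a maximal cyclic subgroup containing its witness $y_k$, and use that a generator of that maximal cyclic subgroup lands in some $X_l$ to get injectivity), the identification $K=M\cap H$ for (ii), the lifting of witnesses through the quotient map for the first half of (iii), the correspondence argument showing each $H_i/N$ is maximal cyclic in $G/N$, the collapsing chain $\lambda(G)=\lambda(G/N)\leq\lambda(G/\Phi(G))\leq\lambda(G)$ for (iv), and the coprime splitting $\langle(g,w)\rangle=\langle g\rangle\times\langle w\rangle$ for (v) are all correct and are the natural arguments one would expect to find in \cite{JRR}. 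One small repair is needed in the forward direction of the uniqueness clause of (i): since ``unique covering by proper subgroups'' must be read as unique \emph{irredundant} covering (otherwise uniqueness could never hold, as one may always adjoin superfluous proper subgroups), your covering $\{K\}\cup\{H_j:j\neq i\}$ obtained by enlarging a non-maximal $H_i$ to $K$ may itself be redundant. This is easily fixed: refine it to an irredundant subcovering; $K$ can never be discarded because $H_i\leq K$ and $H_i\not\subseteq\bigcup_{j\neq i}H_j$, and $K$ is not equal to any $H_j$ (it properly contains the maximal cyclic subgroup $H_i$, so it is not cyclic), so the refined irredundant covering still differs from $\{H_1,\ldots,H_n\}$, which is what the equivalence requires.
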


For every positive integer $n \geqslant 3$, denote by $f(n)$ the largest index $|G:D|$, where $G$ is a group with an irredundant cover $X_1, \ldots, X_n$ and which $D = \cap_{i=1}^{n} X_i$. In this context it is known the following values to the function $f$.  

\begin{proposition} \label{Tom}
\begin{itemize}
 \item[(a)](\cite{S}) $f(3) = 4$;
 \item[(b)] (\cite{T}) $f(4) = 9$;
 \item[(c)] (\cite{BFS}) $f(5) = 16$;
 \item[(d)] (\cite[Theorem D]{AAAH}) $f(6) = 36$. 
\end{itemize}
\end{proposition}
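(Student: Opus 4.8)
The plan is to treat the four values by a single strategy---exhibit an extremal cover realising the claimed index, then prove that no configuration beats it---while recording that each bound is the content of one of the cited papers. The basic reduction is to pass to a core-free intersection: given an irredundant cover $X_1,\dots,X_n$ of $G$ with $D=\bigcap_{i} X_i$, set $C=\mathrm{core}_G(D)$; since $C\le X_i$ for every $i$, the images $X_i/C$ form an irredundant $n$-cover of $G/C$ with intersection $D/C$, the size of the cover is unchanged, and $|G/C:D/C|=|G:D|$. So it suffices to bound $|G:D|$ when $D$ has trivial core. For $n=3$ this already finishes the job: by Scorza's theorem (quoted in the introduction) an irredundant $3$-cover forces $G/D\cong C_2\times C_2$ with $D$ the relevant normal subgroup, whence $|G:D|=4$ and $f(3)=4$.

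For the lower bounds when $n\ge 4$ I would write down explicit covers with $D=1$. The four subgroups of order $3$ of $C_3\times C_3$---the lines of the affine plane over $\mathbb{F}_3$---cover it irredundantly and meet only in the identity, giving $f(4)\ge 9$. Regarding $C_2^4$ as the plane $\mathbb{F}_4^2$ over the field of four elements, its five one-dimensional $\mathbb{F}_4$-subspaces are subgroups isomorphic to $C_2\times C_2$ whose nonidentity elements partition $G\setminus\{1\}$, so $f(5)\ge 16$. The pattern ``$\mathbb{F}_q^2$ covered by its $q+1$ lines'' breaks at $n=6$: the six lines of $\mathbb{F}_5^2$ give only index $25$, whereas $f(6)=36$, so the extremal $6$-cover lives on a genuinely different group of order $36$, namely the one produced by the classification in \cite{AAAH}.

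The real content is the matching upper bound, that every core-free configuration of size $n$ has $|G:D|\le f(n)$. The engine is a Neumann--Tomkinson-type estimate bounding $|G:D|$ by a function of $n$ alone, which cuts the problem down to finitely many possibilities; these are then resolved by controlling the indices $|G:X_i|$ and the pairwise intersections $X_i\cap X_j$, which together pin down the isomorphism type of $G/D$. For $n=4$ and $n=5$ this case analysis is short enough to carry out directly, and it shows that $9$ and $16$ respectively cannot be exceeded, matching the examples above.

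The main obstacle is $n=6$. Because there is no field of order $6$, the bound $36$ is not delivered by one clean family but must be extracted from a finer structural analysis---exactly the classification of groups with $\sigma(G)=6$ in \cite{AAAH}---and the work lies in checking that none of the admissible configurations yields an index above $36$. A secondary point requiring care is the reduction itself: one must verify that quotienting by $\mathrm{core}_G(D)$ genuinely preserves irredundancy and the number of members of the cover, so that the extremal index is not lost in the passage to the quotient.
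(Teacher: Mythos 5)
The paper contains no proof of this proposition: it is stated purely as a citation of four results from the literature (\cite{S} for (a), \cite{T} for (b), \cite{BFS} for (c), and \cite[Theorem D]{AAAH} for (d)) and is then used as a black box, e.g.\ in Lemma \ref{bound}. So there is no internal argument to compare against, and your proposal has to be judged as a reconstruction. The parts of it that can be checked are correct: the reduction to core-free $D$ works (since $C=\mathrm{core}_G(D)\le D\le X_i$, each $X_iC=X_i$, so the images $X_i/C$ still cover $G/C$ irredundantly with intersection $D/C$ of the same index); the case $n=3$ follows from Scorza's theorem, with the caveat that you need its strong form --- that for any irredundant $3$-cover the intersection $D$ itself is normal with $G/D\cong C_2\times C_2$ --- not the weaker existence statement quoted in the paper's introduction; and your witnesses are right: the four lines of $C_3\times C_3$ give $f(4)\ge 9$, the five $\mathbb{F}_4$-lines of $\mathbb{F}_4^2\cong C_2^4$ give $f(5)\ge 16$, and the six lines of $\mathbb{F}_5^2$ indeed fall short of the true value $f(6)=36$.

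The gap, if one demands a self-contained proof, is exactly where you wave your hands: the upper bounds $f(4)\le 9$, $f(5)\le 16$, $f(6)\le 36$. Your sketch (``Neumann--Tomkinson-type estimate'' plus a case analysis ``short enough to carry out directly'') is not an argument, and it badly understates the difficulty --- each of these bounds is the main theorem of one of the cited research papers, not a routine finite check; in particular nothing in your outline explains how the pairwise intersections ``pin down the isomorphism type of $G/D$''. Since the paper itself takes these results on faith from the same sources, deferring parts (b)--(d) to \cite{T}, \cite{BFS} and \cite{AAAH} is perfectly legitimate and puts your write-up at parity with the paper; but it should then be presented honestly as a citation supplemented by the extremal examples, rather than as a proof strategy you could execute.
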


For arbitrary numbers there are bounds for the values of the function $f$, but the precise value is unknown (for more details see \cite{T}).     

\begin{lemma} \label{lem.Cohn} (J. Cohn \cite{Cohn}) 
\begin{itemize}
\item[(a)] If $\bigcup\limits_{i=1}^{n}X_{i}$ is an
irredundant covering of the group $G$, then $\left\vert G\right\vert
\leq \sum\limits_{i=2}^{n}\left\vert X_{i}\right\vert .$
\item[(b)] Let $n$ be a positive integer and $G$ a group with $\lambda(G)=n$. Then there exists a cyclic subgroup $H$ of $G$ whose index is at most $n-1$.  
\end{itemize}
\end{lemma}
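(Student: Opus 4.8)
The plan is to prove part (a) by a double-counting argument driven by irredundancy, and then to extract part (b) from (a) by a pigeonhole step. For (a), irredundancy supplies a distinguished element $a_1 \in X_1 \setminus \bigcup_{i=2}^{n} X_i$. Let $A = X_1 \setminus \bigcup_{i=2}^{n} X_i$ be the set of elements of $G$ covered by $X_1$ alone; then $a_1 \in A$, so $A \neq \emptyset$, and since $\{X_i\}$ is a covering we have $\bigcup_{i=2}^{n} X_i = G \setminus A$, whence $|\bigcup_{i=2}^{n} X_i| = |G| - |A|$. Counting incidences, $\sum_{i=2}^{n} |X_i| = |\bigcup_{i=2}^{n} X_i| + M = |G| - |A| + M$, where $M \geq 0$ records the total over-coverage (repeated membership) among $X_2, \dots, X_n$. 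Thus the target inequality $|G| \leq \sum_{i=2}^{n} |X_i|$ is \emph{equivalent} to $M \geq |A|$.

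The heart of the matter, and the step I expect to be the main obstacle, is establishing $M \geq |A|$. The idea is to translate the subfamily $\{X_i\}_{i \geq 2}$ by the distinguished element $a_1 \in A$: since $a_1 X_1 = X_1$, applying $a_1$ to $G = X_1 \cup \bigcup_{i \geq 2} X_i$ gives $G = X_1 \cup \bigcup_{i \geq 2} a_1 X_i$, so the translates $a_1 X_2, \dots, a_1 X_n$ again cover $G \setminus X_1$. Comparing the original family with its translate, one argues that each element of $A$ accounts for a distinct repeated incidence among the $X_i$ with $i \geq 2$, yielding the required $|A|$ units of over-coverage. This is exactly the combinatorial content of Cohn's lemma, so at this point one may simply appeal to \cite{Cohn}; the delicate point is checking that the translation genuinely produces $|A|$ \emph{distinct} repetitions rather than double-counting them.

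For (b), the reduction is short. By Proposition~\ref{prop.basic}(i), if $H_1, \dots, H_n$ are the maximal cyclic subgroups of $G$ then $\bigcup_{i=1}^{n} H_i$ is an irredundant covering and $n = \lambda(G)$. Applying part (a) to this covering gives $|G| \leq \sum_{i=2}^{n} |H_i|$, a sum of $n-1$ terms. By pigeonhole some index $i_0$ satisfies $|H_{i_0}| \geq |G|/(n-1)$, for otherwise the sum would be strictly less than $|G|$. Since $H_{i_0}$ is a (maximal) cyclic subgroup of $G$, it is cyclic with $|G : H_{i_0}| = |G|/|H_{i_0}| \leq n-1$, which is the assertion.
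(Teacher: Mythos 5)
Your part (b) is correct: combining Proposition~\ref{prop.basic}(i) with part (a) and a pigeonhole step is exactly how the paper later uses this lemma (in the proof of Theorem D, to get $[G:H_2]\leq \lambda(G)-1$). The genuine gap is in part (a). Your reformulation of $|G|\leq\sum_{i=2}^{n}|X_i|$ as $M\geq |A|$ is correct but is only a restatement, and the step that is supposed to prove it --- ``each element of $A$ accounts for a distinct repeated incidence among the $X_i$ with $i\geq 2$'' --- is never established; you concede this yourself and fall back on citing \cite{Cohn}. Since the paper states this lemma with a bare citation to Cohn, that appeal is legitimate, but then your argument adds nothing, and the mechanism you sketch actually cannot work. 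Translating by $a_1\in A\subseteq X_1$ produces no new repetitions: since $a_1\in X_1$, one has $a_1X_i\cap X_1=a_1\left(X_i\cap X_1\right)$, hence $|a_1X_i\setminus X_1|=|X_i\setminus X_1|$ and $|a_1X_i\cap X_1|=|X_i\cap X_1|$ for every $i\geq 2$; the translated family has exactly the same incidence counts relative to $X_1$ and $G\setminus X_1$ as the original one. A concrete warning: cover $G=C_3\times C_3$ irredundantly by its four subgroups of order $3$, with $X_1=\langle a\rangle$ and $a_1=a$. Then $|A|=2$, and all of the over-coverage $M=2$ is concentrated at the identity element, which lies in $X_2\cap X_3\cap X_4$ --- it is not located at any translate of an element of $A$, so no injection of the kind you describe arises from translation by $a_1$.

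The missing idea is a coset count, translating by an element \emph{outside} $X_1$ rather than inside it. Since the cover is irredundant with $n\geq 2$, the subgroup $X_1$ is proper; choose $g\notin X_1$. The coset $gX_1$ is disjoint from $X_1$, hence covered by the $X_i$ with $i\geq 2$, and each nonempty intersection $X_i\cap gX_1$ is a coset of $X_i\cap X_1$, so
\begin{equation*}
|X_1|=|gX_1|\leq\sum_{i=2}^{n}\left\vert X_i\cap X_1\right\vert .
\end{equation*}
On the other hand, the sets $X_i\setminus X_1$, $i\geq 2$, cover $G\setminus X_1$, so $\sum_{i=2}^{n}|X_i\setminus X_1|\geq |G|-|X_1|$. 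Adding the two inequalities gives $\sum_{i=2}^{n}|X_i|\geq |G|$, which is part (a). With this replacement (or with a plain citation of Cohn for (a), which is all the paper itself does), your derivation of (b) goes through unchanged.
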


We need the following result, due to Bryce, Fedri and Serena \cite{BFS}.  

\begin{lemma} \label{BFS}
Let $X_1, X_2, \ldots, X_m$ be an irredundant cover of a group $G$ whose intersection is $D$. If $p$ is a prime, $x$ a $p$-element of $G$ and $|\{i \ : \ x \in X_i\}|=n$, then either $x \in D$ or $p \leqslant m-n$. 
\end{lemma}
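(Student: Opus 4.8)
The plan is to pass to the cyclic $p$-group $H = \langle x\rangle$ and to exploit the irredundancy of the cover through a translation argument. Assume $x \notin D$, so that there is at least one index $j$ with $x \notin X_j$; the goal is then to produce at least $p$ such indices. The first observation I would record is that, since $H$ is cyclic of $p$-power order, its subgroups are totally ordered and the unique maximal one is $M = \langle x^p\rangle$, of index $p$; hence any subgroup of $H$ other than $H$ itself is contained in $M$. In particular, whenever $x \notin X_i$ the proper subgroup $X_i \cap H$ lies in $M$, so that $x^k \in X_i$ forces $p \mid k$.

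Next I would fix an index $j$ with $x \notin X_j$ and, using irredundancy, choose an element $g \in X_j$ lying in no other member of the cover. The heart of the argument is to examine the $p$ elements $g, gx, gx^2, \ldots, gx^{p-1}$ and to track which subgroups can contain them. For $1 \le k \le p-1$ I would first check that $gx^k \notin X_j$: otherwise $x^k = g^{-1}(gx^k) \in X_j \cap H \subseteq M$, forcing $p \mid k$, a contradiction. I would then check that $gx^k$ lies in no subgroup $X_i$ containing $x$: if $x \in X_i$ and $gx^k \in X_i$, then $g = (gx^k)x^{-k} \in X_i$, and since $g$ belongs to $X_j$ alone this gives $i = j$, contradicting $x \in X_i$ but $x \notin X_j$. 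Thus each translate $gx^k$ is covered only by subgroups avoiding $x$.

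Finally I would establish that these translates require pairwise distinct subgroups. If $gx^k$ and $gx^l$ (with $1 \le k < l \le p-1$) both lie in a common $X_i$, then the subgroup computation $(gx^l)^{-1}(gx^k) = x^{k-l} \in X_i \cap H$ applies; since $0 < l - k < p$ we have $p \nmid (k-l)$, so $x^{k-l} \notin M$ and therefore $X_i \cap H = H$, i.e.\ $x \in X_i$, which is impossible by the previous paragraph. Hence the subgroups covering $gx, \ldots, gx^{p-1}$ carry $p-1$ distinct indices, all different from $j$ and all avoiding $x$; together with $X_j$ this yields $p$ distinct members of the cover not containing $x$, so $m - n \geqslant p$, as required.

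The one point needing care is that none of these manipulations presumes $g$ and $x$ commute: each identity used ($x^k = g^{-1}(gx^k)$, $g = (gx^k)x^{-k}$, and $x^{k-l} = (gx^l)^{-1}(gx^k)$) is a product of elements already known to lie in the relevant subgroup, so only closure under the group operation is invoked. I expect the distinctness step---ruling out two translates sharing a subgroup---to be the main obstacle, since it is precisely there that the index $p$ of $M$ in $H$ is converted into the lower bound $p$ on the number of subgroups missing $x$.
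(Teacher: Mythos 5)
Your proof is correct. Note that the paper itself contains no proof of this lemma: it is stated as an imported result, quoted from Bryce--Fedri--Serena \cite{BFS}, so there is no in-paper argument to compare against. Your translation argument is complete: once you fix $j$ with $x \notin X_j$ and pick $g \in X_j$ lying in no other member (which is exactly what irredundancy provides), all three claims reduce correctly to the single structural fact that a member $X_i$ not containing $x$ meets $H = \langle x \rangle$ inside the unique maximal subgroup $\langle x^p \rangle$, so that $x^k \in X_i$ forces $p \mid k$. The count at the end is also right: the translates $gx, \ldots, gx^{p-1}$ are covered by $p-1$ pairwise distinct members, none equal to $X_j$ and none containing $x$, giving $m - n \geqslant p$. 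This is essentially the standard argument (in the spirit of B.~H.~Neumann's coset-covering lemma) and, to my knowledge, is the same mechanism used in the original source \cite{BFS}; your care in avoiding any commutativity assumption between $g$ and $x$ is exactly the point that makes the computation legitimate in an arbitrary group.
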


The following remark give us $\lambda(G)$ for particular abelian groups (see \cite[Proposition 6]{JRR}).  

\begin{rem} \label{rem.abelian} Let $k \geqslant 1$. Then $\lambda (C_{p}\times
C_{p^{k}})=kp-k+2$; $\lambda (C_{p}\times C_{p}\times
C_{p^{k}})=kp^{2}+p-k+2 $; $\lambda (C_{p^{2}}\times
C_{p^{k}})=(k-1)p^{2}-(k-3)p$ and $\lambda (C_{p}\times C_{p^{3}}\times
C_{p^{k}})=(k-1)p^{3}+2p^{2}-(k-2)p$. 
\end{rem}

\section{Supersolvability criterion}

As usual, for a group $G$ we denote by $\pi(G)$ the set of prime divisors of the order of elements of $G$. If a group $G$ has $\pi(G) = \pi$, then we say that $G$ is a $\pi$-group. Throughout the rest of the paper for every group $G$ with $\lambda(G) = m$, the subgroups $H_1, H_2, \ldots, H_m$ stands the set of maximal cyclic subgroups of $G$, $N = \cap_{i=1}^{m} H_i$ and $\overline{G} = G/N$. In particular, we can assume that $|H_i| \geqslant |H_{i+1}|$, for $i\in \{1,2,\ldots,m\}$.  

The goal of this section is to obtain a supersolvability criterion for groups in terms of $\lambda(G)$. The key result is restrict the possibilities of the order $\vert \overline{G}\vert$, when $\lambda(G)=6$. 

\begin{lemma} \label{bound}
Let $G$ be a group with $\lambda(G) = 6$. Then 
\begin{itemize}
 \item[$(a)$] The order $\vert \overline{G}\vert$ is at most $36$; 
 \item[$(b)$] The quotient group $\overline{G}$ is a $\{2,3,5\}$-group. 
\end{itemize}
\end{lemma}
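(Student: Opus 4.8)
For part $(a)$, the plan is simply to translate the known value $f(6)=36$ into a bound on $|\overline{G}|$. Since $\lambda(G)=6$, the group $G$ is non-cyclic (a cyclic group has a single maximal cyclic subgroup, namely itself, so $\lambda=1$), and hence by Proposition \ref{prop.basic}(i) the maximal cyclic subgroups $H_1,\ldots,H_6$ form an irredundant cover of $G$ by proper subgroups with $\bigcap_{i=1}^{6}H_i=N$. By the definition of the function $f$, the index $|G:N|$ cannot exceed $f(6)$. Since $|\overline{G}|=|G/N|=|G:N|$, Proposition \ref{Tom}(d) yields $|\overline{G}|\leqslant 36$.

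For part $(b)$, the key tool is Lemma \ref{BFS}, applied to the cover $H_1,\ldots,H_6$ (so $m=6$ and $D=N$). I would first record the elementary consequence that any $p$-element $x$ of $G$ with $x\notin N$ satisfies $p\leqslant 5$: as the $H_i$ cover $G$, such an $x$ lies in at least one of them, so $n=|\{i:x\in H_i\}|\geqslant 1$, and Lemma \ref{BFS} then forces $p\leqslant m-n\leqslant 5$. It remains to show that every prime $q$ dividing $|\overline{G}|$ produces such an element. Given such a $q$, Cauchy's theorem supplies $\bar{x}\in\overline{G}$ of order $q$; picking a preimage $x\in G$ and writing $|x|=q^{a}k$ with $\gcd(q,k)=1$, the power $x^{k}$ is a $q$-element whose image $\bar{x}^{k}$ still has order $q$, so $x^{k}\notin N$. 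The preliminary observation now gives $q\leqslant 5$, i.e. $q\in\{2,3,5\}$. Hence $\overline{G}$ is a $\{2,3,5\}$-group.

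Neither part poses a substantial obstacle once the cited results are available: part $(a)$ is a direct reading of $f(6)=36$, and part $(b)$ is a clean application of Lemma \ref{BFS}. The only step demanding mild care is the lifting argument in $(b)$, where one must extract a genuine $q$-element lying outside $N$ from an element of order $q$ in the quotient; the coprimality $\gcd(q,k)=1$ is precisely what ensures that passing to the $k$-th power keeps the image of order $q$ and therefore keeps the element out of $N$.
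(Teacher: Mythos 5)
Your proof is correct and follows essentially the same route as the paper: part $(a)$ is read off from $f(6)=36$ via Proposition \ref{Tom}$(d)$ applied to the irredundant cover by the six maximal cyclic subgroups, and part $(b)$ applies Lemma \ref{BFS} to that same cover. In fact your treatment of $(b)$ is more careful than the paper's, which invokes Lemma \ref{BFS} for an arbitrary preimage $x$ of an element of order $p$ in $\overline{G}$ without checking that $x$ is a $p$-element of $G$; your coprime-power lifting argument (passing to $x^{k}$ with $|x|=q^{a}k$, $\gcd(q,k)=1$) supplies exactly the hypothesis the lemma requires.
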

\begin{proof} 
(a). Since the maximal cyclic subgroups $H_1,H_2,H_3,H_4,H_5,H_6$ form an irredundant cover of $G$, we conclude that $\vert \overline{G} \vert \leqslant 36$ (Proposition \ref{Tom}$(d)$). \\ 
 
\noindent{(b)}. If $p \in \pi(\overline{G})$ and $x \in G$ such that the order $\vert xN\vert = p$, then $p \leqslant m-n \leqslant 6-n \leqslant 6$ (Lemma \ref{BFS}), where $n = |\{i \ : \ x \in H_i\}|$. Consequently, $\pi(\overline{G}) \subseteq \{2,3,5\}$. In particular, the order $\vert \overline{G} \vert \leqslant 36$ and $\overline{G}$ is a $\{2,3,5\}$-group (Lemma \ref{bound}), which completes the proof. 
\end{proof}

\begin{lemma} \label{cyclic}
Let $G$ be a group with $\lambda(G)=6$. Then $$|\overline{G}| \not\in \{2,3,5,7,11,13,15,17,19,23,29\}.$$
\end{lemma}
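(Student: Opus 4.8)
The plan is to leverage Proposition \ref{prop.basic}(iii), which tells us that passing to the quotient preserves the invariant: since $N$ is central and $\lambda(G/N)=\lambda(G)$, we have $\lambda(\overline{G})=6$. The key observation is that every order in the list shares one arithmetic feature, namely that it is a \emph{cyclic number}: for each $n\in\{2,3,5,7,11,13,15,17,19,23,29\}$ one checks that $\gcd(n,\varphi(n))=1$, which forces every group of order $n$ to be cyclic. For the prime entries this is immediate, and for the unique composite entry $15=3\cdot 5$ a short Sylow count ($n_5\mid 3$ with $n_5\equiv 1\pmod 5$ gives $n_5=1$, and $n_3\mid 5$ with $n_3\equiv 1\pmod 3$ gives $n_3=1$) shows that the group must be $C_3\times C_5\cong C_{15}$.

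First I would record that $\lambda(\overline{G})=6$, and then argue by contradiction. Supposing $|\overline{G}|$ equals one of the listed values, the preceding paragraph shows $\overline{G}$ is cyclic. But a cyclic group possesses a single maximal cyclic subgroup, namely itself, so Proposition \ref{prop.basic}(i) gives $\lambda(\overline{G})=1$; equivalently, a cyclic group admits no irredundant covering by proper subgroups, since no proper subgroup contains a generator. This contradicts $\lambda(\overline{G})=6$, and the lemma follows. As a shortcut, the seven prime entries outside $\{2,3,5\}$, that is $7,11,13,17,19,23,29$, can be eliminated in one stroke by Lemma \ref{bound}(b), which already asserts that $\overline{G}$ is a $\{2,3,5\}$-group; this leaves only the orders $2,3,5,15$ to be handled by the cyclic-number argument above.

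I do not anticipate a genuine obstacle here, as the entire statement reduces to the elementary fact that $\lambda$ detects non-cyclicity together with the classification of cyclic numbers below $36$. The only point that requires any care is the composite order $15$, where one must invoke Sylow's theorems to rule out a non-abelian structure rather than merely citing primality; every remaining entry is prime and hence trivially cyclic. It is worth noting why the list stops at $29$ and omits the larger cyclic numbers $31,33,35$: those are already excluded by Lemma \ref{bound}, since $31$ is a prime outside $\{2,3,5\}$ and $33,35$ involve the primes $11$ and $7$, so they never arise as candidate orders for $\overline{G}$ in the first place.
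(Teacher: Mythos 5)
Your proof is correct and follows essentially the same route as the paper: the paper's own argument is simply that $\lambda(\overline{G})=\lambda(G)=6$ forces $\overline{G}$ to be non-cyclic, while every order in the list is a cyclic number, so none can occur. Your write-up just makes explicit what the paper leaves implicit (the $\gcd(n,\varphi(n))=1$ criterion, the Sylow count for $15$, and $\lambda(\text{cyclic})=1$), and you even cite the correct item of Proposition \ref{prop.basic} (item (iii), which the paper miscites as (ii)).
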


\begin{proof}
By Proposition \ref{prop.basic} $(ii)$, $\lambda(G) = \lambda(\overline{G})$. From this we can deduce that $\overline{G}$ cannot be cyclic. In particular,  $$|\overline{G}| \not\in \{2,3,5,7,11,13,15,17,19,23,29\},$$ which completes the proof. 
\end{proof}

\begin{lemma} \label{lem.tec}
Let $G$ be a group with $\lambda(G)=6$. Then $$\vert \overline{G}\vert \not\in \{4,6,8,12,24,30\}.$$ 
\end{lemma}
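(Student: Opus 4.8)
The plan is to pass to $\overline{G} = G/N$ and rule out each value of $|\overline{G}|$ in turn. By Proposition \ref{prop.basic}$(iii)$ we have $\lambda(\overline{G}) = \lambda(G) = 6$, and $\overline{G}$ is a $\{2,3,5\}$-group (Lemma \ref{bound}) that is non-cyclic (Lemma \ref{cyclic}). The decisive extra property is that the maximal cyclic subgroups of $\overline{G}$ meet trivially: since $N = \cap_{i=1}^{6} H_i$ and each $H_i$ contains $N$, the images $H_i/N$ are cyclic with $\cap_{i=1}^{6}(H_i/N) = (\cap_{i=1}^{6} H_i)/N = 1$, and a short argument (any cover of a group by cyclic subgroups must already contain each of its maximal cyclic subgroups, so the $\le 6$ subgroups $H_i/N$ must be exactly the $\lambda(\overline{G})=6$ maximal cyclic subgroups) identifies the $H_i/N$ as the maximal cyclic subgroups of $\overline{G}$.

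The conceptual tool I would isolate first is the following observation. If $\overline{z} \in Z(\overline{G})$ has prime order $p$ and $\langle \overline{z}\rangle$ is the unique subgroup of order $p$ in $\overline{G}$, then $\overline{z}$ lies in every maximal cyclic subgroup $M$: if $p \nmid |M|$ then $\langle M, \overline{z}\rangle = M \times \langle \overline{z}\rangle$ is a strictly larger cyclic subgroup (as $\overline{z}$ is central of coprime order), contradicting maximality, so $p \mid |M|$ and by uniqueness the order-$p$ subgroup of $M$ equals $\langle\overline{z}\rangle$, whence $\overline{z}\in M$. But then $\overline{z}$ lies in the trivial intersection of the maximal cyclic subgroups, a contradiction. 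Thus $\overline{G}$ admits no central element of prime order generating the unique subgroup of that order.

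With these reductions in hand I would run through the finitely many isomorphism types of each order, computing $\lambda(K)$ as the number of maximal cyclic subgroups of $K$: for abelian $K$ via Remark \ref{rem.abelian}; for a coprime direct product via $\lambda(A \times B) = \lambda(A)\lambda(B)$ from Proposition \ref{prop.basic}$(v)$ (using $\lambda(C_m)=1$); and for the remaining non-abelian groups by a direct count. The small orders are quick, since for $|K| \in \{4,6,8,12\}$ one finds, e.g., $\lambda(C_2\times C_2)=3$, $\lambda(S_3)=4$, $\lambda(D_8)=5$, $\lambda(Q_8)=3$, $\lambda(C_2^3)=7$, $\lambda(A_4)=\lambda(D_{12})=7$, $\lambda(\mathrm{Dic}_3)=4$, so in every case $\lambda(K)\neq 6$ and these orders are excluded outright.

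The bulk of the work, and the main obstacle, is $|\overline{G}| = 24$, where there are fifteen isomorphism types. The abelian ones and the groups containing $A_4$ (so $\lambda \geq \lambda(A_4)=7$ by Proposition \ref{prop.basic}$(ii)$), together with $D_{24}$, $S_3\times C_4$, $C_2\times\mathrm{Dic}_3$ and $C_3\rtimes D_8$, I would handle by direct counts, each yielding a value different from $6$; the dicyclic type $\mathrm{Dic}_6$ and $SL(2,3)$, which have a unique central involution, and the groups $C_3\times P$ carrying a central $C_3$, are disposed of at once by the observation above. For $|\overline{G}| = 30$ there are four groups: $C_{30}$ is cyclic, $\lambda(D_{30})=16$ and $\lambda(C_5\times S_3)=4$, while $C_3\times D_{10}$ genuinely satisfies $\lambda = 6$ — this is the one place where the observation is indispensable, since its central Sylow $3$-subgroup is the unique subgroup of order $3$ and so lies in every maximal cyclic subgroup, contradicting the trivial intersection. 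Assembling these cases yields $|\overline{G}| \notin \{4,6,8,12,24,30\}$; the delicate point is organizing the order-$24$ computation so that the few non-abelian groups with small $\lambda$ are each verified to avoid the value $6$.
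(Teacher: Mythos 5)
Your proposal is correct, and at the decisive point it takes a genuinely different route from the paper. Both arguments reduce the lemma to checking the finitely many isomorphism types of each order against $\lambda(\overline{G})=6$, and your cited values agree with the paper's; the real content is the one candidate that survives this test, namely $D_5\times C_3$ (your $C_3\times D_{10}$), which does have $\lambda=6$. The paper excludes it by lifting back to $G$: there every maximal cyclic subgroup has prime index ($2$ or $5$), hence is maximal, so $G$ is uniquely covered, and the structure theorem for uniquely covered groups \cite[Theorem 2.5]{JR} forces $\vert G/Z(G)\vert = pq$ together with $Z(G)\leqslant N$, contradicting $\vert\overline{G}\vert=30$. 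You instead stay entirely inside $\overline{G}$: you first show (correctly -- a cover by cyclic subgroups must contain every maximal cyclic subgroup, so the six images $H_i/N$ are exactly the six maximal cyclic subgroups of $\overline{G}$) that the maximal cyclic subgroups of $\overline{G}$ intersect trivially, and then observe that a central element of prime order $p$ generating the unique subgroup of order $p$ would lie in every maximal cyclic subgroup. This is elementary and self-contained, needs no appeal to the uniquely-covered machinery, and pays for itself a second time at order $24$, where it disposes at once of $\mathrm{Dic}_6$, $SL(2,3)$, $C_3\times Q_8$ and $C_3\times D_8$ (the paper's $Q$, $SL(2,3)$, $C_3\times Q_8$, $C_3\times D_4$), which the paper handles by separate counts or quotient arguments. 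What the paper's route buys in exchange is structural information about $G$ itself (the identification $N=Z(G)$) and reliance on an existing theorem rather than a new auxiliary lemma.

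One caveat: your order-$24$ analysis covers only thirteen of the fifteen isomorphism types. Missing are $C_3\rtimes C_8$ (the paper's $L$, with $\lambda(L)=4$) and $C_2\times C_2\times S_3$ (the paper's $C_2\times D_6$, with $\lambda\geqslant\lambda(D_6)=7$ by Proposition \ref{prop.basic}$(ii)$). Neither threatens the argument -- the first falls to the same direct count you propose for the other metacyclic types, and the second to the same monotonicity argument you use for the groups containing $A_4$ -- but as written the enumeration is incomplete and these two groups should be added to your list.
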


\begin{proof}  Without loss of generality we can assume $\overline{G}$ is not cyclic (Proposition \ref{prop.basic} $(ii)$). \\

\noindent{(Case $\vert \overline{G}\vert = 4$).} Then $\overline{G}$ is isomorphic to $C_2 \times C_2$. But $\lambda(C_2 \times C_2) = 3$ and therefore this case does not occur. \\    

 \noindent{(Case $\vert \overline{G}\vert = 6$).} Then $\overline{G}$ is isomorphic to $S_3$. However, $\lambda(S_3) = 4$ and we can discart this case. \\ 
 
\noindent{(Case $\vert \overline{G}\vert = 8$).} Then $\overline{G}$ is isomorphic to one of the following groups $C_2 \times C_4, C_2 \times C_2 \times C_2, D_4$ or $Q_8$. Note that $\lambda(C_2 \times C_4) = 4$, $\lambda(C_2\times C_2 \times C_2) = 7$, $\lambda(Q_8) = 3$ and $\lambda(D_4)=5$. Hence this case does not occur. \\ 

\noindent{(Case $\vert \overline{G}\vert = 12$).} Then $\overline{G}$ is isomorphic to one of the following $C_2 \times C_2 \times C_3, D_6$, $A_4$ or  $T = \langle a,b \vert \ a^3=b^4=1, \ a^b=a^{-1} \rangle$. Thus, $\lambda(C_2 \times C_2 \times C_3) = 3$, $\lambda(D_6) = \lambda(A_4) = 7$ and $\lambda(T) = 4$. Consequently, this case does not happen. \\    

\noindent{(Case $\vert \overline{G} \vert = 24$).} If $\overline{G}$ is abelian and noncyclic, then $\overline{G}$ is isomorphic to one of the following $$C_2 \times C_2 \times C_2 \times C_3, C_2 \times C_4 \times C_3.$$ By Remark \ref{rem.abelian}, $\lambda(C_2 \times C_2 \times C_2 \times C_3) = 7$ and $\lambda(C_2 \times C_4 \times C_3) = 4$. 
 
Now, assume that $\overline{G}$ is non-abelian. Hence $\overline{G}$ is isomorphic to one of the following 
\begin{center}
$D_{12}, S_4, C_4 \times S_3, SL(2,3), C_3 \times Q_8, C_2 \times D_6, C_2 \times A_4, C_3 \times D_4, L, T, Q, M$, 
\end{center} 
  where $L=\langle a,b \vert \  a^3=1=b^8, a^b=a^{-1}\rangle, T=\langle a,b,c \vert \   a^2=1=b^2=c^3=(ba)^4, c^a=c^{-1}, c^b=c \rangle$, $Q=\langle a,b \vert \  a^{12}=1=b^4, a^6=b^2, a^b=a^{-1} \rangle$ and $M = \langle a,b,c \vert \ 1=a^2=b^3=c^4, bcb=c, aca=c, b^a=b \rangle$. Thus, $\lambda(D_{12}) = 13$, $\lambda(S_4) \geqslant \lambda(A_4) = 7$, $\lambda(C_4 \times S_3)=12$, $\lambda(C_3 \times D_4) = \lambda(D_4) = 5$, $\lambda(C_3 \times Q_8) = \lambda(Q_8) = 3$. We have $\lambda(C_2 \times D_6) \geqslant 7$, because $\lambda(D_6) = 7$. We have $\lambda(S_4), \lambda(SL(2,3)), \lambda(C_2 \times A_4) \geqslant 7$, because $SL(2,3)/Z(SL(2,3)) = PSL(2,3) \cong A_4$, $A_4$ is a subgroup of $A_4 \times C_2, S_4$ and $\lambda(A_4) = 7$. Moreover, $\lambda(L) = 4$, $\lambda(T) = 12$, $\lambda(Q)=7$ and $\lambda(M)=9$. Thus, we can assume that $\overline{G}$ cannot have order $24$. \\
 
\noindent{(Case $\vert \overline{G} \vert = 30$).} If $\vert \overline{G} \vert = 30$ then $\overline{G}$ is isomorphic to one of the following $D_{15}$, $S_3 \times C_5$, $D_5 \times C_3$. Note that $\lambda(S_3 \times C_5) = \lambda(S_3) = 4$, $\lambda(D_{15}) = 16$ and $\lambda(D_5 \times C_3) = \lambda(D_5) = 6$. \\

So, it remains to exclude the case when $\overline{G} = D_5 \times C_3$. Firstly, we will prove that $N = \cap_{i=1}^{6} H_i = Z(G)$. It is clear that $N \leqslant Z(G)$. It remains to prove that $Z(G) \leqslant N$. There is no lost in  assume that $\vert G:H_1 \vert = 2$ and $\vert G : H_i \vert = 5$, $i=2,3,4,5,6$. Thus, every maximal cyclic subgroup is also maximal and so $G$ is unique covered. Since $G$ is non-nilpotent, it follows that $\vert G/Z(G)\vert = pq$ and $\langle x,Z(G)\rangle = \langle x \rangle$ for every $x \in G$ \cite[Theorem 2.5]{JR}. In particular, $Z(G)$ is contained in every maximal cyclic subgroup $H_i$. Therefore $$Z(G) \leqslant \displaystyle\bigcap_{i=1}^{6}H_i = N$$ and  $pq = \vert G/Z(G) \vert = \vert \overline{G} \vert =30,$ a contradiction. The proof is complete.       
\end{proof}

Combining Lemmas \ref{bound}, \ref{cyclic} and \ref{lem.tec}, one obtains

\begin{proposition} \label{prop.list}
 Let $G$ be a group with $\lambda(G) = 6$. Then the order $\vert \overline{G} \vert \in \{10,16,18,20,25,27,32,36\}$. 
\end{proposition}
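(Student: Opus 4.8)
The plan is to turn the statement into a finite enumeration: list every order allowed by Lemma \ref{bound}, then strike out the orders forbidden by Lemmas \ref{cyclic} and \ref{lem.tec}, and finally mop up whatever survives by hand.

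First I would record the two structural constraints already available. By Proposition \ref{prop.basic}$(iii)$ we have $\lambda(\overline{G}) = \lambda(G) = 6$, so $\overline{G}$ is noncyclic (a cyclic group has no irredundant cover by proper subgroups), and in particular $|\overline{G}| \geqslant 4$. By Lemma \ref{bound}, $\overline{G}$ is a $\{2,3,5\}$-group with $|\overline{G}| \leqslant 36$. Writing $|\overline{G}| = 2^{a}3^{b}5^{c}$ and listing the integers of this shape in the range $4 \leqslant |\overline{G}| \leqslant 36$ produces the finite list
\[
\{4,5,6,8,9,10,12,15,16,18,20,24,25,27,30,32,36\}.
\]

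Next I would prune. Lemma \ref{cyclic} removes $5$ and $15$, every group of those orders being cyclic. Lemma \ref{lem.tec} removes $4,6,8,12,24$ and $30$. This leaves the candidates $\{9,10,16,18,20,25,27,32,36\}$, which is one value more than claimed. The residual order $9$ is \emph{not} covered by the three quoted lemmas, so I would dispose of it directly: the only noncyclic group of order $9$ is $C_{3}\times C_{3}$, and by Remark \ref{rem.abelian} (with $p=3$, $k=1$) we have $\lambda(C_{3}\times C_{3}) = 3-1+2 = 4 \neq 6$. Hence $\overline{G}$ cannot have order $9$, and we are left with exactly $\{10,16,18,20,25,27,32,36\}$.

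The enumeration is otherwise routine once the lemmas are in hand, so the only genuine subtlety — and the step I would be most careful about — is precisely that \emph{combining the three lemmas does not quite close the argument on its own}: order $9$ survives all three and must be excluded separately via the value $\lambda(C_{3}\times C_{3})=4$. Everything else is bookkeeping over the $\{2,3,5\}$-numbers at most $36$.
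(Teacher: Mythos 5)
Your proof is correct and follows the same route as the paper: the paper's entire proof of this proposition is the single sentence ``Combining Lemmas \ref{bound}, \ref{cyclic} and \ref{lem.tec}, one obtains\ldots''. Your point about order $9$ is not just a subtlety of your write-up — it exposes a genuine gap in the paper's own argument: $9$ is a $\{2,3,5\}$-number at most $36$, it does not appear in the list of Lemma \ref{cyclic}, and it does not appear in the list of Lemma \ref{lem.tec}, so the three quoted lemmas alone leave $9$ as a surviving candidate and the paper's one-sentence proof is incomplete as stated. Your explicit exclusion, namely that the only noncyclic group of order $9$ is $C_3\times C_3$ and $\lambda(C_3\times C_3)=4\neq 6$ by Remark \ref{rem.abelian}, is exactly the missing step; with it, your enumeration yields precisely the claimed set $\{10,16,18,20,25,27,32,36\}$.
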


We obtain the list of all groups $G$ with $\lambda(G)=6$ and order at most $36$ (using GAP \cite{GAP}). This next result will be needed in the proof of Theorem B.     

\begin{lemma} \label{lem.grp.6}
Let $G$ be group with $\lambda(G)=6$. If $G$ has order at most $36$, then $G$ is isomorphic to one of the following $D_5$, $C_4 \times C_4$, $S_3 \times C_3$, $\langle a,b \mid a^5= 1 = b^4, a^b =a^{-1} \rangle$, $\langle a,b \mid a^5 = 1 = b^4, a^b = a^{2}, [a,b]=a\rangle $ $C_5 \times C_5$, $R_3 = \langle a,b \mid a^9 = 1 = b^3, \ a^b = a^4 \rangle$, $C_3 \times C_9$, $C_{16} \times C_2$, $D_5 \times C_3$, $T_5= \langle a,b \mid a^{16}=1=b^2, \ a^b = a^9\rangle$ or $\langle a,b,c \mid a^3=1=b^3=c^4, a^c = a^2, b^c =b, a^b=a\rangle$. 
\end{lemma}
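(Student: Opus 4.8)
The plan is to turn the statement into a finite verification. By Proposition~\ref{prop.basic}(i), $\lambda(G)$ is exactly the number of maximal cyclic subgroups of $G$, so for any fixed $G$ one computes $\lambda(G)$ by listing the cyclic subgroups and counting those that are not properly contained in a larger cyclic subgroup. The first task is therefore to bound the admissible orders of $G$. Since $N=\bigcap_{i=1}^{6}H_i$ is contained in the cyclic group $H_1$, it is itself cyclic, and by Proposition~\ref{prop.basic}(iii) it is central; in particular $|G|=|N|\cdot|\overline{G}|$. Proposition~\ref{prop.list} gives $|\overline{G}|\in\{10,16,18,20,25,27,32,36\}$, so $|\overline{G}|\geqslant 10$, and from $|G|\leqslant 36$ we deduce $|N|\leqslant 3$.

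Running through the pairs $(|N|,|\overline{G}|)$ with $|N|\in\{1,2,3\}$ and $|N|\cdot|\overline{G}|\leqslant 36$ leaves exactly $|G|\in\{10,16,18,20,25,27,30,32,36\}$: the value $|N|=1$ contributes every order in the list of Proposition~\ref{prop.list}, the value $|N|=2$ forces $|\overline{G}|\in\{10,16,18\}$ and hence $|G|\in\{20,32,36\}$, and the value $|N|=3$ forces $|\overline{G}|=10$ and hence $|G|=30$. It is reassuring that these nine orders are precisely the orders appearing in the asserted list. For each of these orders I would run over the isomorphism types of groups, compute the number of maximal cyclic subgroups, and retain those for which this number equals $6$. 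For the abelian candidates the count is supplied directly by Remark~\ref{rem.abelian}, and for the few small non-abelian orders ($10,18,30$) it can be checked by hand.

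The main obstacle is purely one of volume: order $32$ has $51$ isomorphism types, and orders $16$ and $36$ have $14$ each, so a by-hand enumeration is impractical and a machine computation (for instance through the \textsf{SmallGroups} library in GAP~\cite{GAP}) is effectively forced. The remaining step, matching each surviving group to the presentation given in the statement---identifying the two non-abelian groups of order $20$, the $2$-groups $C_{16}\times C_2$ and $T_5$, and the order-$36$ group---is routine bookkeeping rather than a genuine difficulty.
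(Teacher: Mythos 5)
Your proposal is correct and is essentially the paper's own proof: the paper simply states that the list is obtained by running GAP over all groups of order at most $36$ and counting maximal cyclic subgroups, which is exactly the finite verification you describe. Your preliminary reduction of the admissible orders to $\{10,16,18,20,25,27,30,32,36\}$ via Proposition~\ref{prop.list} and the bound $|N|\leqslant 3$ is a sound (and non-circular) optimization that shrinks the enumeration, but the argument still rests on the same machine computation.
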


We are ready to prove Theorem A. \\ 

{\noindent}{\bf Theorem A.} {Let $G$ be a group with $\lambda(G) \leqslant 6$. Then $G$ is supersolvable.}

\begin{proof} 
Suppose that $\lambda(G) \leqslant 5$. By \cite[Theorem 1]{JRR}, the group $G$ is supersolvable. 

Now, we can assume that $\lambda(G)=6$. Let $H_1, \ldots, H_6$ be the set of all maximal cyclic subgroups of $G$ and $N = \cap_{i=1}^{6} H_i$. By Proposition \ref{prop.list} and Lemma \ref{lem.grp.6}, the quotient $\overline{G} = G/N$ is supersolvable. By Proposition \ref{prop.basic}, $N$ is a cyclic normal subgroup of $G$. Therefore, the group $G$ is supersolvable. 
\end{proof}

\begin{rem}
Note that the above bound cannot be improved. For instance, $\lambda(A_4 \times C_n)=7$, where $n$ and $6$ are coprime numbers. 
\end{rem}

\section{Groups with $\lambda(G)=6$}
In this section we describe the structure of groups with exactly $\lambda(G)=6$ (Theorems B and C). First we will examine the Theorem B in the context of Dedekind groups. Recall that a group is said Dedekind if every subgroup is normal (see \cite[5.3.7]{Rob} for more details). Moreover, for abbreviation, we follow the notation of \cite{JRR}. More precisely,  

\begin{itemize}
\item $D_{n}=\langle
a,b \mid a^{n}=1=b^{2},\,a^{b}=a^{-1}\rangle$;
\item $Q_{2^{n}}=\langle
a,b \mid a^{2^{n-1}}=1,\,a^{2^{n-2}}=b^{2},\,a^{b}=a^{-1}\rangle $;
\item $T_n=\langle
a,b \mid a^{2^{n-1}}=1=b^{2},\, a^{b}=a^{1+2^{n-2}}\rangle, \ n\geq 4 $
\item $W_n=\langle
a,b \mid a^{2^{n-1}}=1=b^{2},\,a^{b}=a^{2^{n-2}-1}\rangle$;
\item $R_n = \langle a,b \mid b^3 = 1 = a^{3^{n-1}}, a^b = a^{1+3^{n-2}} \rangle$
\end{itemize}

\begin{lemma} \label{Dedekind}
Let $G$ be a Dedekind group with $\lambda(G)=6$. Then $G$ is abelian. 
\end{lemma}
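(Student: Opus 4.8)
The plan is to invoke the classification of Dedekind groups and reduce the whole statement to a single explicit count. By Baer's structure theorem (see \cite[5.3.7]{Rob}), a Dedekind group is either abelian or \emph{Hamiltonian}, and in the latter case $G \cong Q_8 \times E \times A$, where $E$ is an elementary abelian $2$-group and $A$ is an abelian group of odd order. Assuming for contradiction that $G$ is non-abelian, it therefore suffices to show that no Hamiltonian group satisfies $\lambda(G) = 6$.

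First I would split off the odd part. Since $|Q_8 \times E|$ is a power of $2$ while $|A|$ is odd, these orders are coprime, so Proposition \ref{prop.basic}$(v)$ gives $\lambda(G) = \lambda(Q_8 \times E)\,\lambda(A)$. I would also record an elementary fact used to close the argument: a group can never be written as an irredundant union of exactly two proper subgroups, so (via Proposition \ref{prop.basic}$(i)$) one has $\lambda(H) = 1$ precisely when $H$ is cyclic and $\lambda(H) \geq 3$ otherwise; in particular $\lambda(H) \neq 2$ for every group $H$.

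The heart of the argument is to compute $\lambda(Q_8 \times C_2^{d})$ directly by counting maximal cyclic subgroups, where $E \cong C_2^{d}$. The group $Q_8 \times E$ has exponent $4$, so every maximal cyclic subgroup has order $2$ or $4$. The elements of order $4$ are exactly the pairs $(y,\delta)$ with $y \in \{\pm i, \pm j, \pm k\}$ and $\delta \in E$ arbitrary, giving $6\cdot 2^{d}$ such elements and hence $3\cdot 2^{d}$ cyclic subgroups of order $4$, all necessarily maximal. Each of these subgroups squares into the single involution $(-1,1)$, so among the $2\cdot 2^{d}-1$ involutions of $Q_8 \times E$ exactly one lies inside an order-$4$ cyclic subgroup, while the remaining $2\cdot 2^{d}-2$ involutions generate maximal cyclic subgroups of order $2$. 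This yields the closed formula $\lambda(Q_8 \times C_2^{d}) = 5\cdot 2^{d} - 2$, taking the values $3, 8, 18, \dots$ for $d = 0, 1, 2, \dots$.

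It then remains to check the arithmetic. We would need $(5\cdot 2^{d}-2)\,\lambda(A) = 6$ for some $d \geq 0$. For $d \geq 1$ the first factor is already $\geq 8 > 6$, which is impossible, and for $d = 0$ the equation becomes $3\,\lambda(A) = 6$, forcing $\lambda(A) = 2$, which was excluded above. Hence $\lambda(G) = 6$ is unattainable for Hamiltonian $G$, so $G$ must be abelian. The main obstacle is the maximal-cyclic-subgroup count for $Q_8 \times E$ (in particular checking that no genuinely new maximal cyclic subgroups appear and that the formula holds for all $d$); once $5\cdot 2^{d}-2$ is established, the coprime splitting and the nonexistence of the value $\lambda = 2$ make the elimination immediate.
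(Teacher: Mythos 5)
Your proof is correct, and its skeleton coincides with the paper's: both invoke the classification of Hamiltonian groups from \cite[5.3.7]{Rob}, both split off the odd part via Proposition \ref{prop.basic}$(v)$, and both finish by noting that no group has $\lambda$ equal to $2$. The one genuine difference is how the elementary abelian factor $E \cong C_2^d$ is eliminated. The paper handles $d \geqslant 1$ by observing that $G$ then contains a subgroup $H \cong Q_8 \times C_2$, asserting $\lambda(Q_8 \times C_2) = 8$ without proof, and applying the monotonicity $\lambda(H) \leqslant \lambda(G)$ of Proposition \ref{prop.basic}$(ii)$. You instead prove the closed formula $\lambda(Q_8 \times C_2^d) = 5 \cdot 2^d - 2$ by counting maximal cyclic subgroups directly, and your count checks out: there are $3 \cdot 2^d$ cyclic subgroups of order $4$, all maximal cyclic since the exponent is $4$; every one of them squares onto the single involution $(-1,1)$; and each of the remaining $2^{d+1}-2$ involutions generates a maximal cyclic subgroup of order $2$ (the totals $3$, $8$, $18, \ldots$ agree with $\lambda(Q_8)=3$ and the paper's value $8$ at $d=1$). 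Your route is slightly longer but self-contained: it supplies the verification of the value $8$ that the paper merely cites, treats all $d$ uniformly, and avoids any appeal to subgroup monotonicity; the paper's route is shorter at the price of an unproved numerical claim.
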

\begin{proof}
Argue by contradiction and suppose that there exists a non-abelian Dedekind group $G$ in which $\lambda(G)=6$. By \cite[p. 139]{Rob}, the group $G$ is isomorphic to $$G \cong Q_8 \times \underbrace{C_2 \times \ldots \times C_2}_{n \ times} \times A,$$ for some non-negative integer $n$ and $A$ is an abelian group of odd order. Since $\lambda(Q_8)=3$, it follows that either $n \geqslant 1$ or $A$ is a non-trivial group. On the other hand, if $n \geqslant 1$ then $G$ contains a subgroup $H \cong Q_8 \times C_2$ and $6 = \lambda(G) \geqslant \lambda(H) = 8$, a contradiction. Thus $A$ is necessarily non-trivial. By Proposition \ref{prop.basic}, $6 = \lambda(G) = \lambda(Q_8) \times \lambda(A) = 3 \lambda(A)$, which is not our case. Thus, $G$ is abelian. 
\end{proof}

\begin{proposition} (Rog\'erio, \cite[Proposition 7]{JRR}) \label{maximal.cyclic.rog}

\begin{enumerate}
\item[(i)] $\lambda (D_{n})=n+1$;

\item[(ii)] $\lambda (Q_{2^{n}})=2^{n-2}+1$;

\item[(iii)] $\lambda (T_n)=n+1$; 
\item[(iv)] $\lambda (W_n)=2^{n-3}+2^{n-2} + 1$.
\end{enumerate}
\end{proposition}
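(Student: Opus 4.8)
The plan is to invoke Proposition \ref{prop.basic}(i), which identifies $\lambda(G)$ with the number of maximal cyclic subgroups of $G$; thus in each case the entire computation reduces to enumerating those subgroups directly from the given presentation. Every one of the four groups shares the feature that $\langle a\rangle$ has index $2$, so it is automatically a maximal cyclic subgroup, and every element outside $\langle a\rangle$ has the form $a^{i}b$. The recurring elementary computation will be the square $(a^{i}b)^{2}$, read off from the conjugation relation $a^{b}$; from its value I obtain the order of $a^{i}b$ and the subgroup it generates, and then for each candidate I must verify two things: that it is not properly contained in a larger cyclic subgroup (maximality) and that I am not counting the same subgroup twice (distinctness).

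For $D_{n}$ this is immediate: every $a^{i}b$ is an involution, and since the square of any element of $D_{n}$ lies in $\langle a\rangle$, no reflection is a proper power, so each $\langle a^{i}b\rangle$ is maximal cyclic of order $2$; the $n$ distinct reflections together with $\langle a\rangle$ give $n+1$. For $Q_{2^{n}}$ the relation $a^{b}=a^{-1}$ gives $(a^{i}b)^{2}=b^{2}=a^{2^{n-2}}$, the unique central involution, so every outside element has order $4$, and each such subgroup $\langle a^{i}b\rangle$ contains exactly the two outside elements $a^{i}b$ and $a^{i+2^{n-2}}b$; hence the $2^{n-1}$ outside elements collapse into $2^{n-2}$ cyclic subgroups, each maximal cyclic since no element outside $\langle a\rangle$ has order $8$ and no square equals an $a^{i}b$. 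With $\langle a\rangle$ this yields $2^{n-2}+1$.

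The group $W_{n}$ proceeds similarly but splits into two strata. Here $(a^{i}b)^{2}=a^{i2^{n-2}}$, so $a^{i}b$ is an involution when $i$ is even and has order $4$ (squaring to the central involution $a^{2^{n-2}}$) when $i$ is odd. The $2^{n-2}$ involutions with $i$ even each form their own maximal cyclic subgroup, while the $2^{n-2}$ elements with $i$ odd pair up into $2^{n-3}$ maximal cyclic subgroups of order $4$; adding $\langle a\rangle$ gives $2^{n-2}+2^{n-3}+1$. I expect $T_{n}$ to be the main obstacle, since its outside elements no longer have a uniform order. The key observation is that $b$ centralizes $a^{2}$ (as $a^{2}$ is fixed by $a\mapsto a^{1+2^{n-2}}$), so $A=\langle a^{2},b\rangle\cong C_{2}\times C_{2^{n-2}}$ is abelian and contains every outside element $a^{i}b$ with $i$ even. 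A short computation with $(a^{i}b)^{2}=a^{i(2+2^{n-2})}$ shows that each outside element with $i$ \emph{odd} has order $2^{n-1}$ and that all of them generate the single index-$2$ maximal cyclic subgroup $\langle ab\rangle$. The remaining maximal cyclic subgroups of $T_{n}$ are precisely those of $A$ that contain an element outside $\langle a^{2}\rangle$: any cyclic subgroup of $T_{n}$ meeting an even element $a^{i}b$ is forced to lie inside $A$, so maximality in $A$ coincides with maximality in $T_{n}$ for these subgroups, and $\langle a^{2}\rangle$ is the unique maximal cyclic subgroup of $A$ not containing such an element. The count therefore becomes $2+(\lambda(A)-1)$; invoking Remark \ref{rem.abelian} with $\lambda(C_{2}\times C_{2^{n-2}})=(n-2)\cdot 2-(n-2)+2=n$, this equals $n+1$, as claimed.
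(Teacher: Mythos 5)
Your proposal is correct, and in fact the paper itself offers no proof of this statement at all: it is imported wholesale from \cite[Proposition 7]{JRR}, so your enumeration serves as a self-contained verification. Methodologically you are doing exactly what the paper does for the companion result on $R_n$ (Lemma \ref{lem.Rn}): invoke Proposition \ref{prop.basic}(i) to identify $\lambda(G)$ with the number of maximal cyclic subgroups, then list them from the presentation. Your treatments of $D_n$, $Q_{2^n}$ and $W_n$ are routine and sound; the one genuinely nontrivial step is $T_n$, where your reduction --- all outside elements $a^ib$ with $i$ odd collapse into the single index-$2$ subgroup $\langle ab\rangle$, while those with $i$ even live in the abelian subgroup $A=\langle a^2,b\rangle\cong C_2\times C_{2^{n-2}}$, whose maximal cyclic subgroups other than $\langle a^2\rangle$ remain maximal cyclic in $T_n$, so that $\lambda(T_n)=2+(\lambda(A)-1)=n+1$ via Remark \ref{rem.abelian} --- is a clean and correct way to handle the non-uniform element orders. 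As a side benefit, your count exposes a typo in the paper's proof of Proposition \ref{prop.p.groups.6}, which writes $\lambda(T_\alpha)=\alpha+2$ while using the conclusion $\alpha+1=6$ (i.e.\ $G\cong T_5$) consistent with the statement $\lambda(T_n)=n+1$ proved here.
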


\begin{lemma} \label{lem.Rn}
Let $n \geqslant 3$. If $R_n = \langle a,b \mid b^3 = 1 = a^{3^{n-1}}, a^b = a^{1+3^{n-2}} \rangle$, then $\lambda(R_n) = 2n$.   
\end{lemma}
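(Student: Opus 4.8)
The plan is to compute $\lambda(R_n)$ directly by enumerating the maximal cyclic subgroups of $R_n$, using the characterization from Proposition~\ref{prop.basic}$(i)$ that $\lambda(G)$ equals the number of maximal cyclic subgroups. First I would record the basic structure of $R_n = \langle a,b \mid b^3=1=a^{3^{n-1}}, a^b=a^{1+3^{n-2}}\rangle$. This is a group of order $3^n$ with $\langle a\rangle$ a cyclic normal subgroup of order $3^{n-1}$ and index $3$; since $a^b = a^{1+3^{n-2}}$, the commutator $[a,b]=a^{3^{n-2}}$ is central of order $3$, so $R_n$ has nilpotency class $2$ with $Z(R_n)=\langle a^3, [a,b]\rangle$ and derived subgroup $R_n' = \langle a^{3^{n-2}}\rangle$. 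I would verify that the exponent is $3^{n-1}$ and that $b$ has order $3$.

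\noindent\textbf{Enumerating maximal cyclic subgroups.} The main work is to list the maximal cyclic subgroups and count them. Every element of $R_n$ is of the form $a^i b^j$ with $0\le i<3^{n-1}$, $0\le j<3$. I would organize the count by two families. First, the subgroup $\langle a\rangle$ itself is cyclic of order $3^{n-1}$, and I would check it is maximal cyclic (no cyclic subgroup properly contains it, since any element outside $\langle a\rangle$ together with $a$ generates a non-cyclic group, as $b$ does not centralize $a$). This contributes $1$. The remaining maximal cyclic subgroups must contain elements $a^i b$ or $a^i b^2$ outside $\langle a\rangle$. Here I would compute the order of a typical element $x=a^i b$: using class-$2$ commutator calculus, $x^3 = a^{i(1+(1+3^{n-2})+(1+3^{n-2})^2)}b^3 = a^{3i + (\text{central correction})}$, and one finds $(a^i b)^3 = a^{3i}\cdot c$ where $c\in R_n'$ is a central cube-type factor; the key point is that $x^3 \in \langle a^3\rangle$, so $x$ has order dividing $3^{n-1}$ and $\langle x\rangle$ has order $3^{n-1}$ exactly when $a^{3i}$ has order $3^{n-2}$, i.e. when $3\nmid i$. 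The hard part will be the bookkeeping: determining precisely which cyclic subgroups $\langle a^i b^j\rangle$ coincide, so as to avoid overcounting, and verifying maximality of each.

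\noindent\textbf{Counting and coincidences.} After establishing that the elements outside $\langle a\rangle$ generate cyclic subgroups of order $3^{n-1}$, I would determine how many \emph{distinct} such subgroups arise. Each cyclic subgroup of order $3^{n-1}$ that meets $\langle a\rangle$ in $\langle a^3\rangle$ (a subgroup of order $3^{n-2}$) is counted by looking at its image in the quotient $R_n/\langle a^3\rangle$, which is an elementary-abelian-or-small group of order $9$. I expect the coset structure to produce exactly $2n-1$ such subgroups from the $b$- and $b^2$-layers, which together with $\langle a\rangle$ gives the claimed total $2n$. Concretely, I would count generators $a^i b^{\pm 1}$ up to the equivalence ``generate the same cyclic subgroup,'' where two generators are equivalent iff they are powers of one another; since each subgroup of order $3^{n-1}$ has $\varphi(3^{n-1})=2\cdot 3^{n-2}$ generators, dividing the number of order-$3^{n-1}$ elements lying off $\langle a\rangle$ by this count yields the number of subgroups.

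\noindent\textbf{Cross-check via induction or known values.} As a consistency check and a cleaner route, I would verify the base case $n=3$ by hand: $R_3$ is the group of Lemma~\ref{lem.grp.6}, where it appears with $\lambda(R_3)=6=2\cdot 3$, matching the formula. I would then set up an inductive step relating $R_n$ to $R_{n-1}$ via the central quotient $R_n/\langle [a,b]\rangle$ or via Proposition~\ref{prop.basic}$(iii)$, showing that passing from $n-1$ to $n$ adds exactly two new maximal cyclic subgroups (one from each nontrivial $b$-coset gaining one extra subgroup as the order of $a$ doubles its $3$-adic valuation). The main obstacle throughout is the coincidence analysis — ensuring the generators $a^i b$ and $a^{i'} b$ (or $a^i b^2$) that lie in the same cyclic subgroup are identified correctly and that none of these subgroups is contained in a larger cyclic one — so I would devote the bulk of the argument to the explicit power computations $(a^i b^j)^k$ in this class-$2$ group and to the resulting equivalence classes.
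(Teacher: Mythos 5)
Your setup (structure of $R_n$, the computation $(a^ib^j)^3=a^{3i}$ via class--$2$, $p=3$ power calculus, and the order criterion ``order $3^{n-1}$ iff $3\nmid i$'') is correct, but the counting step that carries the whole proof is wrong, and it would not produce $2n$. Your concrete plan is to divide the number of elements of order $3^{n-1}$ lying off $\langle a\rangle$ by $\varphi(3^{n-1})$. That quotient is $4\cdot 3^{n-2}\big/\bigl(2\cdot 3^{n-2}\bigr)=2$, not the $2n-1$ you ``expect'': there are exactly \emph{three} cyclic subgroups of order $3^{n-1}$ in all of $R_n$, namely $\langle a\rangle$, $\langle ab\rangle$, $\langle ab^2\rangle$. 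The missing idea is that maximal cyclic subgroups of $R_n$ are \emph{not} all of maximal order. Since the cube map is an endomorphism here, the set of cubes is the subgroup $\langle a^3\rangle$; and $\langle x\rangle$ is properly contained in some cyclic $\langle y\rangle$ iff $x\in\langle y^3\rangle$, i.e.\ iff $x$ is a cube. Hence the maximal cyclic subgroups are exactly the $\langle x\rangle$ with $x\notin\langle a^3\rangle$, and these occur in every order $3,9,\dots,3^{n-1}$: for each $k\in\{0,\dots,n-3\}$ the $4\cdot3^{n-2-k}$ elements $a^ib^j$ with $j\neq 0$ and $3^k\,\|\,i$ split into exactly $2$ subgroups $\langle a^{3^k}b\rangle,\langle a^{3^k}b^2\rangle$ of order $3^{n-1-k}$, and the six elements with $i\in\{0,3^{n-2},2\cdot3^{n-2}\}$, $j\neq0$ give the three order-$3$ subgroups $\langle b\rangle$, $\langle a^{3^{n-2}}b\rangle$, $\langle a^{3^{n-2}}b^2\rangle$; with $\langle a\rangle$ this totals $1+2(n-2)+3=2n$. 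This is exactly the list the paper writes down ($\langle a\rangle$, $\langle b\rangle$, $\langle a^{3^i}b^j\rangle$, $0\le i\le n-2$, $j\in\{1,2\}$); your scheme, as stated, would instead return $3$. Your own base-case check at $n=3$ would have flagged this: three of the six maximal cyclic subgroups of $R_3$ have order $3$, not $9$.

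A secondary problem is the proposed induction: $R_n/\langle[a,b]\rangle$ is \emph{not} $R_{n-1}$. Since $[a,b]=a^{3^{n-2}}$ generates $R_n'$, that quotient is abelian, isomorphic to $C_{3^{n-2}}\times C_3$; in fact $Z(R_n)=\langle a^3\rangle$ is cyclic, so $R_n'$ is the unique minimal normal subgroup and \emph{every} proper quotient of $R_n$ is abelian. So no quotient of $R_n$ is isomorphic to $R_{n-1}$, and Proposition~\ref{prop.basic}$(iii)$ cannot be used to set up the inductive step you describe. The direct route is the right one, but it must be repaired as above: classify maximal cyclic subgroups by the ``non-cube'' criterion (or, as the paper does, exhibit the full list including the small-order ones) rather than restricting attention to cyclic subgroups of maximal order.
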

\begin{proof}
One observes that the subgroups $\langle a \rangle$, $\langle b\rangle$ and $\langle a^{3^i}b^j\rangle$ are the maximal cyclic subgroup of $R_n$, where $i \in \{0,1,2, \ldots, n-2\}$ and $j \in \{1,2\}$. It follow that $\lambda(R_n) = 2(n-1) + 2 = 2n$.   
\end{proof}

\begin{proposition} \label{prop.p.groups.6}
Let $G$ be a $p$-group with $\lambda(G)=6$. Then $G$ is isomorphic to one the following $C_4 \times C_4$, $C_5 \times C_5$, $R_3$, $C_3 \times C_9$, $C_{16} \times C_2$ or $T_5$. 
\end{proposition}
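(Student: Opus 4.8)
The plan is to reduce the classification to the already-computed list of small groups in Lemma \ref{lem.grp.6} by showing that, for a $p$-group, the central subgroup $N = \bigcap_{i=1}^{6} H_i$ is necessarily trivial, so that $G = \overline{G}$ has order at most $36$ and Lemma \ref{lem.grp.6} applies to $G$ directly.

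First I would record that $N$ is central and $\lambda(\overline{G}) = \lambda(G) = 6$ by Proposition \ref{prop.basic}(iii); since $G$ is a $p$-group, so is $N$. The heart of the argument is the claim $N = 1$. Suppose instead $N \neq 1$; then $N$, being a nontrivial $p$-subgroup, contains an element $z$ of order $p$. Because $z \in N \leq H_i$ and each maximal cyclic subgroup $H_i$ is cyclic, $\langle z \rangle$ is the unique subgroup of order $p$ of every $H_i$. As the $H_i$ cover $G$, any element of order $p$ lies in some $H_j$ and hence generates $\langle z \rangle$; thus $\langle z \rangle$ is the unique subgroup of order $p$ of $G$. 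By the classical structure theorem for $p$-groups with a single subgroup of order $p$ (\cite[5.3.6]{Rob}), $G$ is then cyclic or a generalized quaternion group $Q_{2^n}$. A cyclic group cannot satisfy $\lambda(G) = 6$, and $\lambda(Q_{2^n}) = 2^{n-2}+1$ by Proposition \ref{maximal.cyclic.rog}(ii), which never equals $6$; in either case this contradicts $\lambda(G) = 6$. Hence $N = 1$.

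With $N = 1$ we have $G = \overline{G}$, so $|G| = |\overline{G}| \leq 36$ by Lemma \ref{bound}(a). Applying Lemma \ref{lem.grp.6} to $G$ itself, $G$ is isomorphic to one of the twelve listed groups of order at most $36$ with $\lambda = 6$; selecting those that are $p$-groups leaves exactly $C_4 \times C_4$, $C_5 \times C_5$, $R_3$, $C_3 \times C_9$, $C_{16}\times C_2$ and $T_5$. That these six genuinely realize $\lambda = 6$ is recorded by Remark \ref{rem.abelian} for the abelian ones, by Lemma \ref{lem.Rn} for $R_3$ (since $\lambda(R_3) = 2\cdot 3 = 6$), and by Proposition \ref{maximal.cyclic.rog}(iii) for $T_5$ (since $\lambda(T_5) = 5 + 1 = 6$).

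The step I expect to be the crux is establishing $N = 1$: without it one only controls the quotient $\overline{G}$ and must separately rule out proper central cyclic extensions, which for $p = 2$ would otherwise drag in the full classification of $2$-groups possessing a cyclic subgroup of index $4$. The observation that $N \neq 1$ forces a unique subgroup of order $p$, and hence the cyclic-or-quaternion dichotomy (both incompatible with $\lambda = 6$), is precisely what lets us bypass that case analysis and appeal directly to the finite list in Lemma \ref{lem.grp.6}.
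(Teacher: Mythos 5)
Your proposal is correct, and it takes a genuinely different route from the paper's own proof. The paper never shows $N=1$: it uses Proposition \ref{prop.list} and Lemma \ref{lem.grp.6} only to pin down the quotient $\overline{G}$ (whose order, being a prime power, must lie in $\{16,25,27,32\}$), and then lifts the structure back to $G$ case by case --- for $\overline{G}\cong C_4\times C_4$ it argues that every maximal cyclic subgroup of $G$ is normal and invokes the Dedekind-group Lemma \ref{Dedekind} to force $G$ abelian, while in the remaining cases it uses that $G$ has a cyclic subgroup of index $p$ together with the classification \cite[5.3.4]{Rob} of such $p$-groups, finishing with the $\lambda$-computations of Remark \ref{rem.abelian}, Lemma \ref{lem.Rn} and Proposition \ref{maximal.cyclic.rog}. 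Your key step is valid: if $N\neq 1$ and $z\in N$ has order $p$, then every $H_i$ contains $z$, so $\langle z\rangle$ is the unique order-$p$ subgroup of each cyclic $H_i$; since the $H_i$ cover $G$, every order-$p$ element of $G$ generates $\langle z\rangle$, so $G$ has a unique subgroup of order $p$ and by \cite[5.3.6]{Rob} is cyclic or generalized quaternion, both incompatible with $\lambda(G)=6$ (as $\lambda(Q_{2^n})=2^{n-2}+1$ never equals $6$). This collapses the whole lifting problem: $G=\overline{G}$ has order at most $36$ by Lemma \ref{bound}(a), and Lemma \ref{lem.grp.6} applies to $G$ directly. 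What your route buys is brevity and independence from Lemma \ref{Dedekind} and \cite[5.3.4]{Rob}; it also sidesteps a small infelicity in the paper's case division, which lists $C_{16}\times C_2$ under the case $\vert\overline{G}\vert=16$ even though its six maximal cyclic subgroups intersect trivially, so that it properly belongs to the case $\vert\overline{G}\vert=32$. The cost is that you lean on the GAP-generated Lemma \ref{lem.grp.6} applied to $G$ itself rather than merely to its quotient, but since the paper's argument rests on that same lemma anyway, nothing is really lost; moreover your triviality claim for $N$ is of independent interest, as it shows that in any $p$-group that is neither cyclic nor generalized quaternion the maximal cyclic subgroups intersect trivially.
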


\begin{proof}
Without loss of generality we can assume that $\overline{G}$ is not cyclic (Proposition \ref{prop.basic} $(ii)$) and we deduce that $|\overline{G}| \in \{16,25,27,32\}$ (Proposition \ref{prop.list} and Lemma \ref{lem.grp.6}). 

Assume that $\vert\overline{G}\vert=16$. By Lemma \ref{lem.grp.6}, $\overline{G} = C_4 \times C_4$. Moreover, we deduce that $|G/N:H_i/N|=4$, $i=1,2,3,4,5,6$ and so, every maximal cyclic is a normal subgroup. Consequently, $G$ is a Dedekind group. By Lemma \ref{Dedekind}, $G$ is abelian. Then $G$ is isomorphic to one of the following $C_{2^{\alpha-2}} \times C_2 \times C_2$, $C_{2^{\alpha-2}} \times C_4$ or  $C_{2^{\alpha-1}} \times C_2$. By Remark \ref{rem.abelian}, 
\begin{itemize}
\item $\lambda(C_{2^{\alpha-2}} \times C_2 \times C_2) = 2^2(\alpha-2) + 2 + (\alpha -2) + 2 = 5\alpha -6 \neq 6$;
\item $\lambda(C_{2^{\alpha-2}} \times C_4) = 4(\alpha-3) - 2(\alpha-5) = 6$ and so, $\alpha=4$ and $G=C_4 \times C_4$;
\item $\lambda(C_{2^{\alpha-1}} \times C_2) = (\alpha-1)+2 = 6$ and so, $\alpha=5$ and $G \cong C_{16} \times C_2$.  
\end{itemize}

Suppose that $|\overline{G}| = 25$. Since $\lambda(G) = \lambda(\overline{G})=6$, it follows that $\overline{G} = C_5 \times C_5$. In particular, $G$ is a $5$-group and has a cyclic subgroup of index $5$. Arguing as in the previous paragraph, we deduce that $G$ is abelian. Hence $G \cong C_{5^{\alpha-1}} \times C_5$. According to Remark \ref{rem.abelian}, $\lambda(G) = 4(\alpha -1)+2 = 6$ and so, $\alpha = 2$. Thus, $G \cong C_5 \times C_5$. 

Assume that $|\overline{G}| = 27$.  By Lemma \ref{lem.grp.6}, $\overline{G}$ is isomorphic to $C_3 \times C_9$ or $\langle a,b \mid a^9=1=b^3, a^b=a^4\rangle$. In both cases, $\overline{G}$ contains a maximal cyclic subgroup of index $3$ and so, $G$ contains a cyclic subgroup of index 3. In particular, we can deduce that the group $G$ is isomorphic to either $C_{3^{n-1}} \times C_3$ or $R_n$, where $n \geqslant 3$ (see \cite[5.3.4]{Rob} for more details). 
\begin{itemize}
\item If $\lambda(C_{3^{n-1}} \times C_3) = (\alpha-1)\cdot 3 - (\alpha -1) + 2 = 6$, then $\alpha=3$ and so, $G \cong C_9 \times C_3$ (Remark \ref{rem.abelian});
\item If $\lambda(R_n) = 2n$, then $n=3$ and so, $G$ is isomorphic to $R_3$ (Lemma \ref{lem.Rn}). 
\end{itemize}

Suppose that $\vert\overline{G}\vert=32$. By Lemma \ref{lem.grp.6}, $\overline{G} \cong C_{16} \times C_2$ or $\overline{G} \cong T_5$. In both cases, $\overline{G}$ contains a cyclic subgroup of index $2$ and so, $G$ contains a maximal cyclic subgroup of index $2$. By \cite[5.3.4]{Rob}, $G$ is isomorphic to one of the following, $C_{2^{\alpha}}$, $C_{2^{\alpha-1}} \times C_2$, $D_{2^{\alpha-1}}$, $Q_{2^{\alpha}}$, $T_{\alpha}$ or $W_{\alpha}$. By Lemma \ref{maximal.cyclic.rog}
\begin{itemize}
\item If $\lambda(C_{2^{\alpha-1}} \times C_2) = 2(\alpha-1)-(\alpha-1)+2=6$, then $\alpha=5$ and so, $G$ is isomorphic to $C_{16} \times C_2$;
\item If $\lambda(T_{\alpha}) = \alpha + 2=6$, then $\alpha=4$ and so, $G$ is isomorphic to $T_5$;
\item Note that $\lambda(D_{2^{\alpha-1}}) = 2^{\alpha-1} + 1=6$ does not happen;
\item Note that $\lambda(Q_{2^{\alpha}}) = 2^{\alpha-2} + 1=6$ does not happen;
\item Note that $\lambda(W_{\alpha}) = 2^{\alpha-3} + 2^{\alpha-2} + 1=6$ does not happen.
\end{itemize}
The result follows. 
\end{proof}

For reader's convenience we restate Theorem B: \\ 

{\noindent}{\bf Theorem B.}{ Let $G$ be a nilpotent group with $\lambda(G) = 6$. Then $G \cong P \times C_n$, where $P$ is a Sylow $p$-subgroup and $p$ does not divides $n$. Moreover, $G$ is isomorphic to one of the following: 
\begin{itemize}
 \item[$(a)$] $C_4 \times C_4 \times C_n$, where $n$ is an odd positive integer;
 \item[$(b)$] $C_5 \times C_5 \times C_n$, where $5$ does not divide $n$;
 \item[$(c)$] $C_3 \times C_9 \times C_n$, where $3$ does not divide $n$;
 \item[$(d)$] $R_3 \times C_n$, where $R_3 \cong \langle a,b \mid a^9=1=b^3, a^b=a^4\rangle$ and $3$ does not divides $n$; 
 \item[$(e)$] $G \cong C_2 \times C_{16} \times C_n$, where n is an odd positive integer;
 \item[$(f)$] $G \cong T_5 \times C_n$, where $T_5 = \langle a,b \mid a^{16}=1=b^2, \ a^b = a^9\rangle$ and $n$ is an odd positive integer. 
\end{itemize}
}
\begin{proof}
Set $\vert G\vert  = p_1^{n_1}p_2^{n_2} \ldots p_r^{n_r}$ and $P_1$,$P_2$, \ldots, $P_r$ the Sylow subgroups of $G$. It follows that $G = P_1 \times \ldots \times P_r$. By Proposition \ref{prop.basic}, $$6=\lambda(G) = \lambda(P_1)\lambda(P_2) \ldots \lambda(P_r).$$ Since there is no groups $G$ with $\lambda(G)=2$, we conclude that there exists an integer $j \in \{1,2,\ldots,r\}$ such that $G = P_j \times C_n$, where $\lambda(P_j)=6$ and $p_j$ does not divide $n$. In particular, every $P_j$ is a $p$-subgroup of a type indicated in Proposition \ref{prop.p.groups.6}. The proof is complete. 
\end{proof}

Now we will deal with Theorem C: {\it Let $G$ be a non-nilpotent group with $\lambda(G) = 6$. Then $G$ is isomorphic to one of the following: 
\begin{itemize}
\item[(a)] $\langle a,b \mid a^5 = 1 = b^n, \ a^b = a^4\rangle$, where $4^n \equiv 1$ $(\text{mod \ } 5)$ and $5 \nmid n$;
\item[(b)] $\langle a,b \mid a^5 = 1 = b^n, \ a^b = a^2\rangle$, where $5 \nmid n$ and $4 \mid n$; 
\item[(c)] $\langle a,b \mid a^5 = 1 = b^n, \ a^b = a^3\rangle$, where $5 \nmid n$ and $4 \mid n$.
\item[(d)] $\langle a,b \mid a^{3n} = 1 = b^{6n}, a^b = a^{n+1}, a^3 = b^6 \rangle$, where $3 \nmid (n+1)$;
\item[(e)] $\langle a,b \mid a^{3n} = 1 = b^{6n}, a^b = a^{2n+1}, a^3 = b^6 \rangle$, where $3 \nmid (n+2)$; 
\item[(f)] $\langle a,b \mid a^{3n} = 1 = b^{6n}, a^3 = b^6, a^b = a^{n+1}\rangle$, where $2 \mid n$ and $3 \nmid (n+1)$;
\item[(g)] $\langle a,b \mid a^{3n} = 1 = b^{6n}, a^3 = b^6, a^b = a^{2n+1}\rangle$, where $2 \mid n$ and $3 \nmid (n+2)$. 
\end{itemize} 
} 

It is a well known result that if $G$ is a group in which all Sylow $p$-subgroups of $G$ are cyclic, then $G$ is metacyclic. Moreover, $$G = \left\langle a,b \mid \ a^m=1=b^n, a^b=a^r \right\rangle,$$ 
where $r^n \equiv 1$ (mod $m$), $m$ is odd, $0 \leqslant r < m$, $m$ and $n(r-1)$ are coprime numbers (Holder, Burnside, Zassenhaus' Theorem \cite[10.1.10]{Rob}). Now, we start describing the structure of groups in which all Sylow $p$-subgroups of $G$ are cyclic and $\lambda(G)=6$. 

\begin{lemma} \label{lem.sylow.cyclic}
Let $G$ be a group with $\lambda(G)=6$. Suppose that all Sylow $p$-subgroup are cyclic. Then $G$ is non-nilpotent. Moreover, the group $G$ is isomorphic to one of the following groups:  
\begin{itemize}
\item[(a)] $\langle a,b \mid \ a^5 = 1 = b^n, \ a^b = a^4\rangle$, where $4^n \equiv 1$ $(\text{mod \ }5)$ and $5 \nmid n$;
\item[(b)] $\langle a,b \mid \ a^5 = 1 = b^n, \ a^b = a^2\rangle$, where $5 \nmid n$ and $4 \mid n$; 
\item[(c)] $\langle a,b \mid \ a^5 = 1 = b^n, \ a^b = a^3\rangle$, where $5 \nmid n$ and $4 \mid n$.
\end{itemize}
\end{lemma}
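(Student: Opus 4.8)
The plan is to use the Holder--Burnside--Zassenhaus structure theorem quoted just before the statement: since every Sylow subgroup of $G$ is cyclic, we may write $G = \langle a,b \mid a^m = 1 = b^n,\ a^b = a^r\rangle$ with $m$ odd, $r^n \equiv 1 \pmod m$, $0 \leqslant r < m$, and $\gcd(m, n(r-1)) = 1$. First I would dispose of the nilpotency claim: if $G$ were nilpotent with all Sylows cyclic, then $G$ would itself be cyclic, forcing $\lambda(G) = 1$ by Proposition \ref{prop.basic}(i) (a cyclic group has a single maximal cyclic subgroup, itself), contradicting $\lambda(G) = 6$. Hence $G$ is non-nilpotent, which in the metacyclic presentation means $r \neq 1$, so the action of $\langle b\rangle$ on $\langle a\rangle$ is nontrivial.

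The heart of the argument is to compute $\lambda(G)$ directly from the presentation and set it equal to $6$. By Proposition \ref{prop.basic}(i), $\lambda(G)$ is exactly the number of maximal cyclic subgroups, so I would enumerate these. The subgroup $\langle a\rangle$ of order $m$ is normal and maximal cyclic. The remaining maximal cyclic subgroups are generated by elements of the form $a^i b^j$; since the order of $\langle a\rangle$ is coprime to $n(r-1)$ and $\langle a \rangle$ is the unique minimal normal-type piece, one reduces the count to understanding cyclic subgroups that project onto $\langle b\rangle N / N$-type cosets. The key observation is that $m$ must be prime: Lemma \ref{lem.Cohn}(b) gives a cyclic subgroup of index at most $5$, and combined with the bound $|\overline{G}| \in \{10,16,18,20,25,27,32,36\}$ from Proposition \ref{prop.list} together with $m$ odd and $\gcd(m,n(r-1))=1$, the only surviving prime is $m = 5$. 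Once $m = 5$, the automorphism $a \mapsto a^r$ has order dividing $4 = |\mathrm{Aut}(C_5)|$, so $r \in \{2,3,4\}$ (excluding $r=1$), giving precisely the three families in the statement, with the congruence conditions $4^n \equiv 1$, resp. the order-$4$ element forcing $4 \mid n$, recorded exactly as in cases (a), (b), (c).

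The main obstacle I anticipate is the combinatorial count of maximal cyclic subgroups for each value of $r$, i.e.\ verifying that the count equals $6$ precisely in these families and ruling out all other admissible $(m,n,r)$. For $r = 4$ (order $2$ automorphism) the structure is essentially dihedral-type over the $5$-part, and I would count the maximal cyclic subgroups much as in Proposition \ref{maximal.cyclic.rog}(i) for $D_n$: the subgroup $\langle a\rangle$ plus the cyclic subgroups meeting the coset structure of $b$, tuned so the total is $6$. For $r \in \{2,3\}$ the automorphism has order $4$, which forces $4 \mid n$; here I would track how the $2$-part of $\langle b\rangle$ interacts with $\langle a\rangle$ and check the count again yields $6$, while the condition $5 \nmid n$ ensures the Sylow $5$-subgroup is exactly $\langle a \rangle \cong C_5$.

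Concretely, I would organize the proof as follows. After fixing the presentation and establishing $r \neq 1$, I would argue:
\begin{enumerate}
\item[(1)] Reduce to $m$ prime using Lemma \ref{lem.Cohn}(b) and Proposition \ref{prop.list}, then pin down $m = 5$ from the parity and coprimality constraints.
\item[(2)] Since $r^n \equiv 1 \pmod 5$ and $r \neq 1$, conclude $r \in \{2,3,4\}$ and translate the order of $r$ modulo $5$ into the divisibility conditions on $n$ (namely $4 \mid n$ when $r \in \{2,3\}$, and $4^n \equiv 1$ for $r = 4$).
\item[(3)] For each $r$, enumerate the maximal cyclic subgroups and verify $\lambda(G) = 6$, thereby confirming these are exactly the groups arising and excluding the remaining parameter values.
\end{enumerate}
The verification in step (3) is where the genuine work lies; everything before it is bookkeeping forced by the earlier lemmas.
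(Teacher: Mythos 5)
Your overall strategy---presenting $G$ via Holder--Burnside--Zassenhaus and computing $\lambda(G)$ directly from the parameters $(m,n,r)$, rather than first identifying the quotient $\overline{G}$ as the paper does---could be made to work, but as written it has two genuine gaps. The first is step (1): the facts you cite do not pin down $m=5$. To relate $m$ to $\vert\overline{G}\vert$ at all you must first show $N\cap\langle a\rangle=1$, i.e.\ that no nontrivial power of $a$ is central; this follows from $\gcd(m,n(r-1))=1$ but has to be proved (the paper carries out exactly this computation). Granting it, non-nilpotency of $\overline{G}$ gives $\vert\overline{G}\vert\in\{10,18,20,36\}$ and hence $m\in\{3,5,9\}$. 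The values $m=3$ and $m=9$ are odd and perfectly compatible with your parity and coprimality constraints, and Lemma \ref{lem.Cohn}(b) does not exclude them either: for $m=3$ the cyclic subgroup $C_G(a)$ already has index $2\leqslant 5$. Ruling them out requires either the GAP-computed Lemma \ref{lem.grp.6} (the paper's route: the order $18$ and $36$ groups with $\lambda=6$ have non-cyclic Sylow $3$-subgroups) or an honest count showing that $m=3$ forces $\lambda(G)=4$ and $m=9$ forces $\lambda(G)=10$. Your phrase ``the only surviving prime is $m=5$'' asserts the conclusion without an argument.

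The second gap is step (3), which you yourself identify as ``where the genuine work lies'' and then leave unexecuted; moreover your sketch starts from a false premise, namely that $\langle a\rangle$ is maximal cyclic. If $d$ denotes the order of $r$ modulo $m$ and $d<n$ (as happens in family (a) whenever $n>2$, where $d=2$), then $C_G(a)=\langle a\rangle\times\langle b^d\rangle$ is cyclic (because $\gcd(m,n)=1$) and strictly contains $\langle a\rangle$. The correct enumeration for $m$ prime is: the maximal cyclic subgroups are $C_G(a)$ together with the $m$ conjugates of $\langle b\rangle$ (distinct since $N_G(\langle b\rangle)\cap\langle a\rangle=1$, again by coprimality), every element of $G$ lies in one of them, and therefore $\lambda(G)=m+1$. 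This single identity both completes your step (3) and closes the first gap, since it shows $m=5$ is forced and that every admissible pair $(n,r)$ genuinely yields $\lambda=6$. Had you carried this out, your argument would be a legitimately different and arguably cleaner route than the paper's, which instead identifies $\overline{G}$ from Lemma \ref{lem.grp.6}, eliminates the possibility $\overline{G}\cong Q_{20}$ by the unique-covering argument of \cite{JR}, and only then lifts the structure back to $G$; but as it stands the two steps that carry all the content are, respectively, unjustified and undone.
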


\begin{proof}
Assume that $G$ is nilpotent. Since every Sylow $p$-subgroup of $G$ is cyclic, it follows that  $G$ is cyclic and consequently, $\lambda(G)=1$. So, we can assume that $G$ is non-nilpotent. We deduce that $|\overline{G}| \in \{10,18,20,36\}$ (Proposition \ref{prop.list}).  

Note that if $|\overline{G}| \in \{18,36\}$. In both cases, by Lemma \ref{lem.grp.6}, a Sylow $3$-subgroup of $G$ is not cyclic. In particular, these cases does not occur. So, we can assume that $|\overline{G}| \in \{10,20\}$.   

\noindent{(a).} Assume that $|\overline{G}|=10$. Therefore $\overline{G} \cong D_{5}$. It is clear that, every Sylow $p$-subgroup of $G$ is cyclic. By Holder, Burnside, Zassenhaus' Theorem \cite[10.1.10]{Rob}, $$G = \left\langle a,b \mid \ a^m=1=b^n, a^b=a^r \right\rangle,$$ 
where $r^n \equiv 1$ (mod $m$), $m$ is odd, $0 \leqslant r < m$, and $m$ and $n(r-1)$ are coprime numbers. Since $\lambda (G)=6$, we conclude that $G$ is not cyclic and so $\left\langle b\right\rangle $ cannot be normal in $G$. In particular, $
Z(G)<\left\langle b\right\rangle $. Indeed if $a^{i}b^{j}\in Z(G)$ then $%
a^{i}b^{j}=$ $\left( a^{i}b^{j}\right) ^{b}=\left( a^{i}\right)
^{b}b^{j}=a^{ir}b^{j}$ and $a^{ir-i}=1$. Therefore $m \mid i(r-1)$. Since $%
\gcd(m,n(r-1))=1$, we have that $m \mid i$ and thus $i=0$. Thus $\left\vert
G:\left\langle b\right\rangle \right\vert =m=5$, because $%
N=Z(G)<\left\langle b\right\rangle <G$ and $m$ is odd. Moreover $r=4$ and $%
\langle b^{2}\rangle =Z(G)$. So $G\cong \left\langle
a,b \mid \ a^{5}=b^{n}=1,a^{b}=a^{4}\right\rangle $ where $4^{n}\equiv 1(\!\!\!\!%
\mod 5)$ and $5 \nmid n$, because $5$ and $n(4-1)$ are coprime numbers. \\  

\noindent{(b) and (c)}. Assume that $|\overline{G}|=20$. Then either $G$ is isomorphic to one of the following $C_{20}, C_2 \times C_{10}, D_{10}, Q_{20},  \langle a,b \mid a^5= 1 = b^4, a^b =a^{-1} \rangle$. Note that $\lambda(C_{20})=1$, $\lambda(C_2 \times C_{10}) = 3$, $\lambda(D_{10})=11$. 

Consider $\overline{G} \cong Q_{20} = \langle a,b \mid a^{10}=1, a^5 = b^2, a^b = a^{-1} \rangle$. In this case, we may assume that  $|G: H_1| = 2$ and $|G:H_i|=5$ for each $i \in \{2,3,4,5,6\}$. In particular, every maximal cyclic is also maximal and so, $G$ is unique covered. By \cite{JR}, $|G/Z(G)|=pq$ for some primes $p,q$ and $\langle Z(G),x\rangle$ is cyclic for every $x \in G$. It follows that, $Z(G) \leqslant H_i$, for every $i \in \{1,2,3,4,5,6\}$. In particular, $Z(G) \leqslant N$. On the other hand, $N \leqslant Z(G)$ and $|\overline{G}|=20$, which is impossible. 

Now, we can assume that $\overline{G} \cong \langle a,b \mid a^5= 1 = b^4, a^b =a^{-1} \rangle$. It is clear that every Sylow $p$-subgroup of $G$ is cyclic. By Holder, Burnside, Zassenhaus' Theorem \cite[10.1.10]{Rob}, 

$$G = \left\langle a,b \mid \ a^m=1=b^n, a^b=a^r \right\rangle,$$ 
where $r^n \equiv 1$ (mod $m$), $m$ is odd, $0 \leqslant r < m$, and $m$ and $n(r-1)$ are coprime numbers. Arguing as in the proof of item (a) we deduce that $m=5$, $4 \mid n$, $5 \nmid n$ and $Z(G) \leqslant \langle b\rangle$. Moreover, $N = Z(G)$, because $N \leqslant Z(G)$ and $Z(\overline{G})= \overline{1}$. Since $G$ is not cyclic and $b^2$ is not central, we deduce that $r \in \{2,3\}$. The proof is complete.   
\end{proof}

We are ready to prove Theorem C. 

\begin{proof}[Proof of Theorem C]
Without loss of generality we can assume $\overline{G}$ is not cyclic (Proposition \ref{prop.basic} $(ii)$). By Proposition \ref{prop.list}, $$\vert\overline{G}\vert \in \{10,16,18,25,27,32,36\}.$$ 

Since $G$ is non-nilpotent group and the cases $\vert \overline{G} \vert = 10$ and $20$ was considered in Lemma \ref{lem.sylow.cyclic}, we can assume that $|\overline{G}| \in \{18,36\}$. \\ 

\noindent{(Case $\vert \overline{G} \vert = 18$).} By Lemma \ref{lem.grp.6}, the group  $\overline{G}$ is isomorphic to $S_3 \times C_3$. We can deduce that all maximal cyclic subgroups of $G$ have index $3$ or $6$ (Proposition \ref{prop.basic}). Let $H_1, \ldots, H_6$ be the maximal cyclic subgroups. More precisely, we can write $|G:H_i|=3$, for $i=1,2,3$ and $|G:H_j|=6$, $j=4,5,6$, where $H_4,H_5,H_6$ are normal subgroups of $G$. Then $G = H_1H_4$ and $H_1 \cap H_4 = N$. Set $H_1 =\langle b\rangle$ and $H_4 = \langle a\rangle$, $n = |N|$. Therefore, we can write $G = \langle a,b \mid a^{3n} = 1 = b^{6n}, a^b = a^r, a^3=b^6\rangle$, for some positive integer $r \in \{2,3,\ldots,3n-1\}$. Now, we describe the values to $r$ in terms of $n$. Since $a^b = a^r$, it follows that $a=a^{b^6}=a^{r^6}$ and so, $3n \mid (r^6-1)$. In particular, $3$ divides $r^3-1$ or $r^3+1$. We can deduce that $3$ divides $r-1$ or $r+1$.   Moreover, since $a^3 \in Z(G)$, it follows that $a = (a^b)^3 = (a^r)^3$ and so, $3n$ divides $3(r-1)$. In particular, $r-1 = nt$ for some positive integer $t$. On the other hand, since the order of $a$ is $3n$ and $r \in \{2,3,\ldots,3r-1\}$, it follows that $t=1$ or $2$. Now, assume that $t=1$. Then $3$ divides $n$ or $3$ divides $n+2$. We deduce that:
$$\langle a,b \mid a^{3n} = 1 = b^{6n}, a^b = a^{n+1}, a^3 = b^6 \rangle,$$ where $3 \nmid (n+1)$. It remains to consider the case $t=2$. It follows that, $3$ divides $n$ or $n+1$ and so, 
$$\langle a,b \mid a^{3n} = 1 = b^{6n}, a^b = a^{2n+1}, a^3 = b^6 \rangle,$$ where $3 \nmid (n+2)$. \\ 

\noindent{(Case $\vert \overline{G} \vert = 36$).} By Lemma \ref{lem.grp.6}, the group  $\overline{G}$ is isomorphic to $$\langle a,b,c \mid a^3=1=b^3=c^4, a^c = a^2, b^c =b, a^b=a\rangle.$$ We can deduce that all maximal cyclic subgroups of $G$ have index $3$ or $6$ (Proposition \ref{prop.basic}). Let $H_1, \ldots, H_6$ be the maximal cyclic subgroups. More precisely, we can write $|G:H_i|=3$, for $i=1,2,3$ and $|G:H_j|=6$, $j=4,5,6$, where $H_4,H_5,H_6$ are normal subgroups of $G$. Set $H_1 =\langle b\rangle$ and $H_4 = \langle a\rangle$, $n = |N|$. Therefore, we can write $$G = \langle a,b \mid a^{3n} = 1 = b^{6n}, a^b = a^r, a^3=b^6\rangle,$$ for some positive integer $r \in \{2,3,\ldots,3n-1\}$ and $2$ divides $n$. Now, we describe the values to $r$ in terms of $n$. Arguing as in the Case $|\overline{G}|=18$ we can deduce that 
\begin{itemize}
\item $3$ divides $r-1$ or $r+1$; 
\item $r-1 = nt$ for some positive integer $t$. 
\end{itemize}
On the other hand, since the order of $a$ is $3n$ and $r \in \{2,3,\ldots,3r-1\}$, it follows that $t=1$ or $2$. Now, assume that $t=1$. Then $3$ divides $n$ or $3$ divides $n+2$. We deduce that:
$$\langle a,b \mid a^{3n} = 1 = b^{6n}, a^b = a^{n+1}, a^3 = b^6 \rangle,$$ where $3 \nmid (n+1)$ and $2$ divides $n$. It remains to consider the case $t=2$. It follows that, $3$ divides $n$ or $n+1$ and so, 
$$\langle a,b \mid a^{3n} = 1 = b^{6n}, a^b = a^{2n+1}, a^3 = b^6 \rangle,$$ where $3 \nmid (n+2)$ and $2$ divides $n$. The result follows. 
\end{proof}

\section{Solvability criteria}

In this section we prove some solvability criteria for group in terms of $\lambda(G)$. We need the following result, due to G. Zhang \cite{Zhang}. 

\begin{theorem} There are no self-normalizing cyclic subgroups in finite simple non-abelian groups.
\label{Zhang}
\end{theorem}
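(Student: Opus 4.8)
The plan is to prove the statement by contradiction: assume $G$ is finite non-abelian simple and $C=\langle x\rangle$ is a cyclic subgroup with $N_G(C)=C$, and derive a contradiction via the classification of finite simple groups (CFSG). Before invoking the classification I would first isolate the one genuinely elementary point. Because $C$ is abelian we have $C\leqslant C_G(C)\trianglelefteq N_G(C)=C$, so $C_G(C)=C$; equivalently $C_G(x)=\langle x\rangle$, i.e.\ $x$ is \emph{self-centralizing}. Hence the theorem reduces to showing that in a finite non-abelian simple group every self-centralizing element $x$ admits some $g\in G\setminus\langle x\rangle$ with $\langle x\rangle^{g}=\langle x\rangle$; once such a $g$ exists, $N_G(\langle x\rangle)\supsetneq\langle x\rangle$ and we are done. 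The remaining work is to verify this claim family by family.

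For the alternating groups $A_n$ $(n\geqslant 5)$ I would argue by cycle type. The order of $C_{S_n}(y)$ for a permutation of type $\prod_i i^{a_i}$ is $\prod_i i^{a_i}\,a_i!$, and the requirement $C_{A_n}(x)=\langle x\rangle$ forces $x$ to have pairwise coprime cycle lengths and at most one fixed point (the near-regular elements, the extreme case being an $n$-cycle with $n$ odd). For such an $x$ the normalizer in $S_n$ of the subgroup generated by a single $\ell$-cycle realizes the full automorphism group of the cyclic group of order $\ell$, of order $\phi(\ell)>1$, so $N_{S_n}(\langle x\rangle)\supsetneq\langle x\rangle$; a parity count then shows that enough of this normalizer lies in $A_n$ to keep $N_{A_n}(\langle x\rangle)\supsetneq\langle x\rangle$, and the general pairwise-coprime case is assembled from the single-cycle computation.

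The groups of Lie type form the heart of the proof. Writing $x=su$ in Jordan form with $s$ semisimple and $u$ unipotent, the equality $C_G(x)=\langle x\rangle$ forces $u=1$ and $x=s$ regular semisimple, apart from a bounded list of small-rank and small-field exceptions to be checked by hand; thus $\langle x\rangle$ coincides with a \emph{cyclic} maximal torus $T=C_G(x)$. Since $T$ is cyclic, every $g\in N_G(T)$ carries $x$ to another element of $T=\langle x\rangle$ and hence normalizes $\langle x\rangle$, so $N_G(\langle x\rangle)=N_G(T)$. The proof is then completed by the fact that the relative Weyl group $N_G(T)/T\cong W(G,T)$ is never trivial: for a Coxeter (Singer) torus it already contains the image of a twisted Coxeter element, whose order is at least the Coxeter number. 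Finally, each of the $26$ sporadic groups is disposed of directly from the centralizer and maximal-subgroup data in the \textsc{Atlas}.

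I expect the Lie type step to be the main obstacle. The delicate points are: showing uniformly that a self-centralizing element must generate a maximal torus (controlling the unipotent part and the low-rank, small-field exceptions where $C_G(x)=\langle x\rangle$ can fail to be toral), and confirming that \emph{every} cyclic maximal torus $T$ has $W(G,T)\neq 1$ across all Lie families and twisted forms. Both rest on the explicit classification of maximal tori and their normalizers, and, at bottom, on CFSG itself; there does not appear to be a classification-free route to the statement.
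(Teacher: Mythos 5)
First, a point of comparison that matters here: the paper does not prove this theorem at all --- it is imported as a known result of G. Zhang \cite{Zhang} and used as a black box in the proof of Theorem D. So your proposal cannot be matched against an internal argument; it has to stand on its own, and as written it does not. The decisive gap is in the Lie-type step, which you correctly identify as the heart of the matter but then rest on a false structural claim: that $C_G(x)=\langle x\rangle$ forces $x$ to be regular semisimple ``apart from a bounded list of small-rank and small-field exceptions.'' In fact, for every prime $p\geqslant 5$ a unipotent element $u$ of $PSL(2,p)$ satisfies $C_G(u)=\langle u\rangle$ (its centralizer in $SL(2,p)$ is the unitriangular group together with $-1$, whose image in $PSL(2,p)$ has order $p$), so the exceptional set is an infinite family, and the unipotent and mixed Jordan-decomposition cases require a genuine argument rather than a finite check. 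There is, incidentally, an elementary argument you missed: a self-normalizing cyclic $p$-subgroup must be a full Sylow $p$-subgroup (normalizers grow in $p$-groups), and then $N_G(P)=P=C_G(P)$ yields a normal $p$-complement by Burnside's transfer theorem, contradicting simplicity. This disposes of all elements of prime-power order, in every family, without the classification --- which also shows that your closing assertion, that no classification-free ingredient is available, is too strong.

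Beyond that, two of your intermediate claims are wrong or unverified. The description of the self-centralizing elements of $A_n$ (``pairwise coprime cycle lengths and at most one fixed point'') fails: a $3$-cycle in $A_5$ has two fixed points, yet its $S_5$-centralizer $\langle(123)\rangle\times\langle(45)\rangle$ meets $A_5$ in exactly $\langle(123)\rangle$, so it is self-centralizing; the alternating-group case analysis therefore has to be redone, not just assembled from the single-cycle case. And the nontriviality of $N_G(T)/T$ for every cyclic maximal torus is asserted, not proved: for untwisted groups it is immediate (since $w\in C_W(w)$ and $C_W(1)=W$), but for the twisted families ${}^2A_n$, ${}^2B_2$, ${}^2G_2$, ${}^3D_4$, ${}^2F_4$, ${}^2E_6$ it needs exactly the explicit torus--normalizer data that you defer. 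In sum, what you have is a plausible outline of a CFSG-based proof --- presumably the same general strategy as the cited source --- but with one false load-bearing claim and several steps acknowledged as unverified, it is a program, not a proof.
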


The next well known result is a consequence of Cayley's Theorem.

\begin{lemma} \label{normalizer} Let $G$ be a simple non-abelian group and let $H$ be a proper subgroup of finite index in $G$. Then $G$ is isomorphic to a subgroup of the Alternating group on the left coset space $G/H$.
\end{lemma}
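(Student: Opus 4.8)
The plan is to exploit the natural permutation representation of $G$ on the set of left cosets $\Omega = G/H$ given by left multiplication, $g \cdot (xH) = gxH$. This action induces a homomorphism $\varphi \colon G \to \mathrm{Sym}(\Omega)$, where $\mathrm{Sym}(\Omega)$ denotes the symmetric group on $\Omega$; since $H$ has finite index, $\Omega$ is finite and $\mathrm{Sym}(\Omega)$ is a finite symmetric group. The two things to establish are that $\varphi$ is injective (so that $G$ embeds in $\mathrm{Sym}(\Omega)$) and that its image actually lands inside the alternating group $\mathrm{Alt}(\Omega)$.

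For injectivity, I would compute the kernel of $\varphi$. An element $g$ fixes every coset $xH$ precisely when $x^{-1}gx \in H$ for all $x \in G$, so $\ker\varphi = \bigcap_{x \in G} xHx^{-1}$, the normal core of $H$ in $G$. This is a normal subgroup of $G$, so by simplicity it is either trivial or all of $G$. It cannot equal $G$, since $\ker\varphi \leqslant H$ and $H$ is a proper subgroup. Hence $\ker\varphi = 1$ and $\varphi$ is an embedding of $G$ into $\mathrm{Sym}(\Omega)$.

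To force the image into $\mathrm{Alt}(\Omega)$, I would compose $\varphi$ with the sign homomorphism $\mathrm{sgn}\colon \mathrm{Sym}(\Omega) \to \{\pm 1\}$, obtaining a homomorphism $\psi = \mathrm{sgn}\circ\varphi \colon G \to \{\pm 1\}$. Its kernel is again normal in $G$, hence trivial or all of $G$ by simplicity. If $\ker\psi$ were trivial, then $G$ would embed in the group $\{\pm 1\}\cong C_2$, forcing $|G|\leqslant 2$ and contradicting that $G$ is non-abelian. Therefore $\ker\psi = G$, meaning $\psi$ is trivial and $\varphi(G)\leqslant \mathrm{Alt}(\Omega)$. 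Combining this with the injectivity from the previous step shows that $G\cong\varphi(G)$ is a subgroup of the alternating group on $\Omega = G/H$.

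The argument is entirely standard and presents no serious obstacle; the only point requiring a moment's care is the final step, where one must invoke the order bound on non-abelian simple groups (or simply that such a group admits no nontrivial homomorphism to $C_2$) to rule out the possibility that the sign map restricts to an isomorphism. Everything else follows mechanically from the simplicity of $G$ applied to the two relevant kernels.
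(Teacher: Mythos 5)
Your proof is correct and follows essentially the same route as the paper: embed $G$ into $\mathrm{Sym}(G/H)$ via the coset action (simplicity kills the core of $H$), then use simplicity again to force the image into the alternating group. The paper phrases this last step as ``$N \cap \mathrm{Alt}(G/H)$ is normal in the simple group $N$, and triviality would embed $N$ into the order-two quotient $\mathrm{Sym}/\mathrm{Alt}$,'' which is just your sign-homomorphism argument in different notation.
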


\begin{proof} By Cayley's Theorem  \cite[1.6.8 and 1.6.9]{Rob} the simple group $G \cong G/Core(H)$ is isomorphic to a subgroup $N$ of the symmetric group $S_{|G|/|H|}=Sym(G/H)$. By the simplicity of $N$ and normality of the Alternating group $A_{G/H}=Alt(G/H)$ in $Sym(G/H)$, we have two cases to consider. Note that $Alt(G/H) \cap N$ is normal in $N$ and $N$ is simple, hence $Alt(G/H) \cap N = N$ or $Alt(G/H) \cap N$ is trivial. In the first case, $N \leq Alt(G/H)$ and the result follows. Now if $Alt(G/H) \cap N$ is trivial, then $N \cong NAlt(G/H)/Alt(G/H)$ which is a simple non-abelian subgroup of the order two group $Sym(G/H)/Alt(G/H)$, a contraction and the result follows. 
\end{proof}

Recall that if $G$ is a group with $\lambda(G)=n$, then $H_1, H_2, \ldots, H_n$ are the set of all maximal cyclic subgroups of $G$ (Proposition \ref{prop.basic}). In particular, by Lemma \ref{lem.Cohn}, the index of $H_2$ in $G$ is at most $n-1$.  Here $A_n$ denotes the Alternating group of degree $n$. We will denote by $M_{11}$ and $M_{12}$ the Mathieu groups.

\begin{lemma} Let $G$ be a simple non-abelian group. Suppose that $G$ is isomorphic to some subgroup of $A_n$, where $n \leq 14$. Then $\lambda(G) \geqslant 31$.

\label{geral}
\end{lemma}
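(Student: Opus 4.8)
The plan is to reduce the statement to a finite, completely explicit list of groups. The hypotheses force $G$ to have a faithful transitive permutation representation of degree at most $14$, equivalently a proper subgroup of index $k$ with $5 \leqslant k \leqslant 14$ (the lower bound because $A_k$ is non-abelian only for $k \geqslant 5$). First I would invoke the classification of finite simple groups, or more concretely the standard tables of minimal faithful permutation degrees, to enumerate all simple non-abelian groups admitting such a representation. This yields exactly the alternating groups $A_5, A_6, \ldots, A_{14}$ together with $PSL(2,7)$, $PSL(2,8)$, $PSL(2,11)$, $PSL(2,13)$, $PSL(3,3)$, $M_{11}$ and $M_{12}$ (using the exceptional isomorphisms $PSL(2,5) \cong A_5$, $PSL(2,9) \cong A_6$, $PSL(3,2) \cong PSL(2,7)$, $PSL(4,2) \cong A_8$ to avoid redundancy).

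With the list in hand I would split into two cases. For every group on the list that contains a copy of $A_5$, the conclusion is immediate from monotonicity: by Proposition \ref{prop.basic}$(ii)$ we get $\lambda(G) \geqslant \lambda(A_5) = 31$. This disposes of $A_5, \ldots, A_{14}$ (each contains $A_5$ acting on five of the points), of $PSL(2,11)$ and $M_{11}$ (whose maximal subgroups include $A_5$, respectively $S_5$ and $PSL(2,11)$), and of $M_{12}$ (which contains $M_{11}$).

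The four groups $PSL(2,7)$, $PSL(2,8)$, $PSL(2,13)$ and $PSL(3,3)$ have order coprime to $5$ (indeed $60 \nmid 504 = |PSL(2,8)|$, and $5$ divides none of $168$, $1092$, $5616$), so they contain no copy of $A_5$ and must be treated directly. For these I would bound $\lambda(G)$ from below by counting maximal cyclic subgroups of a single type, using $\lambda(G) = \#\{\text{maximal cyclic subgroups}\}$ from Proposition \ref{prop.basic}$(i)$. Concretely, $PSL(2,8)$ has $63$ involutions, none a square (the Sylow $2$-subgroup is elementary abelian), giving $63$ maximal cyclic subgroups of order $2$; $PSL(2,13)$ has $91$ cyclic subgroups of order $6$ coming from its split tori (normalizer $D_{12}$ of order $12$, so $1092/12 = 91$ of them), each maximal cyclic since the element orders are $1,2,3,6,7,13$; and $PSL(3,3)$ has $144$ Singer cycles $C_{13}$ (the Sylow $13$-normalizer has order $39$, so there are $5616/39 = 144$). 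In each of these three cases a single family already exceeds $31$. For $PSL(2,7)$ one combines the $8$ subgroups of order $7$ with the $28$ subgroups of order $3$ (there being no elements of order $6$ or $14$, all of these are maximal cyclic), giving at least $36 \geqslant 31$; a complete count gives $\lambda(PSL(2,7)) = 57$.

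The main obstacle is ensuring the enumeration in the first step is exhaustive: this rests on the classification of simple groups possessing a small-degree permutation representation and must be pinned down with care, ruling out for instance $PSL(2,16)$, $PSL(3,4)$ and $PSU(3,3)$ by their larger minimal degrees. The secondary difficulty is the bookkeeping for the four exceptional groups, where one must verify both which conjugacy classes generate maximal cyclic subgroups and the sizes of those classes, equivalently the orders of the relevant torus normalizers. These verifications are routine but sensitive to the order in which they are carried out, and can be confirmed independently with GAP \cite{GAP}.
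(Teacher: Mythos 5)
Your proposal is correct, and its skeleton coincides with the paper's: enumerate the non-abelian simple groups embeddable in $A_n$ for $n \leq 14$, dispose of every group on the list containing a copy of $A_5$ via monotonicity ($\lambda(G) \geqslant \lambda(A_5) = 31$, Proposition \ref{prop.basic}$(ii)$), and treat the remaining four groups individually. Where you genuinely diverge is in how the two computational steps are discharged. The paper obtains the enumeration by a GAP search for conjugacy classes of perfect subgroups of $A_n$, $5 \leq n \leq 14$ (arriving at the same list as yours, with $PSL(3,2) \cong PSL(2,7)$), and then simply quotes exact machine-computed values $\lambda(PSL(3,2)) = 57$, $\lambda(PSL(2,8)) = 127$, $\lambda(PSL(2,13)) = 183$, $\lambda(PSL(3,3)) = 1275$. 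You instead derive the enumeration from the classification of simple groups of small minimal faithful permutation degree --- and your preliminary reduction from a possibly intransitive embedding $G \leq A_n$ to a faithful transitive action of degree between $5$ and $14$ is valid, since simplicity forces some orbit kernel to be trivial --- and you replace the exact $\lambda$-values by hand-checkable lower bounds: $63$ involutions in $PSL(2,8)$, $91$ split tori $C_6$ in $PSL(2,13)$, $144$ Singer cycles $C_{13}$ in $PSL(3,3)$, and $8 + 28 = 36$ subgroups of orders $7$ and $3$ in $PSL(2,7)$. These counts are numerically correct, and the element-order spectra you cite ($\{1,2,3,7,9\}$, $\{1,2,3,6,7,13\}$, $\{1,2,3,4,6,8,13\}$, $\{1,2,3,4,7\}$ respectively) do guarantee that each family consists of maximal cyclic subgroups, so by Proposition \ref{prop.basic}$(i)$ each count is a valid lower bound for $\lambda$; they are also consistent with the paper's exact values. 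The trade-off: the paper's route needs no classification theorem at all (a perfect-subgroup search inside the explicit groups $A_5,\ldots,A_{14}$ is elementary in principle, just delegated to software), whereas your route makes the $\lambda$-estimates verifiable by hand but leans on much heavier input --- CFSG or the tables of small-degree transitive simple groups --- to certify that the list of candidate groups is exhaustive, which you correctly identify as the main point requiring care.
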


\begin{proof} All simple non-abelian group is a perfect group. Therefore using GAP \cite{GAP} we compute the conjugacy classes of perfect subgroups of $A_n$ for $5 \leq n \leq 14$ and we obtain the following representatives of simple non-abelian subgroups

\begin{itemize}
\item[(1)] $n=5: \ A_5$;
\item[(2)] $n=6: \ A_i$, where $5 \leqslant i \leqslant 6$;
\item[(3)] $n=7: \ A_i$ and $PSL(3,2)$, where $5 \leqslant i \leqslant 7$;
\item[(4)] $n=8: \ A_i$ and $PSL(3,2)$, where $5 \leqslant i \leqslant 8$;
\item[(5)] $n=9: \ A_i$ and $PSL(3,2)$, where $5 \leqslant i \leqslant 9$;
\item[(6)] $n=10: \ A_i$, $PSL(3,2)$ and $PSL(2,8)$, where $5 \leqslant i \leqslant 10$;
\item[(7)] $n=11: \ A_i, PSL(3,2), PSL(2,j), M_{11}$, where $5 \leqslant i \leqslant 11$ and $j \in \{8,11\}$;
\item[(8)] $n=12: \ A_i, PSL(3,2), PSL(2,j), M_{11}$ and $M_{12}$, where $5 \leqslant i \leqslant 12$ and $j\in \{8,11\}$;
\item[(9)] $n=13: \ A_i, PSL(3,k), PSL(2,j), M_{11}, M_{12}$, where $5 \leqslant i \leqslant 13$, $j \in \{8,11\}$ and $k\in \{2,3\}$;
\item[(10)] $n=14: \ A_i, PSL(3,k), PSL(2,j), M_{11}, M_{12}$, where $5 \leqslant i \leqslant 14$, $j=8,11,13$ and $k\in \{2,3\}$.
\end{itemize}

Note that $\lambda(H) \leq \lambda(K)$ for $H \leq K$. We have that $A_5$ is a subgroup of $A_n$, for $n \geq 5$, $M_{11}$, $M_{12}$ and $PSL(2,11)$. Moreover $\lambda(A_5)=31$, $\lambda(PSL(3,2))=57$, $\lambda(PSL(3,3))=1275$, $\lambda(PSL(2,8))=127$, and $\lambda(PSL(2,13))=183$. It follows that there is no $G \leq A_n$ satisfying the hypothesis. The result follows. 
\end{proof}

Combining Theorem \ref{Zhang}, Lemma \ref{geral} and \ref{normalizer} with some properties of $\lambda$ we have the following result.

\begin{proof}[Proof of Theorem D]
Suppose that the claim is false and let $G$ be a counterexample of minimal order. Without loss generality we assume that $G$ is a simple non-abelian group. Otherwise, let $N \neq 1$ be a normal subgroup of $G$. By the properties of $\lambda$ in \cite[Propositions $4$ and $5$]{JRR}), the subgroup $N$ and the quotient $G/N$ are solvable because $|N|, |G/N| < |G|$ and $\lambda(N), \lambda(G/N) \leq \lambda(G)$, hence $G$ is a solvable group, a contradiction. Consequently $G$ is a simple non-abelian group.

Let $H_1, \ldots, H_s$ be the set of all maximal cyclic subgroups, where $s=\lambda(G) \leq 30$. By \cite{Cohn}, the index $[G:H_2] \leq s-1 \leq 29$. Since $G$ is a simple group, we have $[G:H_2] \leq 28$, because $H_2$ cannot be maximal cyclic subgroup (otherwise $G$ will be a solvable group, \cite{PK}).

By the Theorem \ref{Zhang}, we have that the index $[N_G(H_2):H_2] \neq 1$ and by the simplicity of $G$, $2 < [G:N_G(H_2)] \leq 14$, hence setting $H=N_G(H_2)$ in the Lemma \ref{normalizer}, we have $G \leq A_n$ with $n \leq 14$ and the result follows of the Lemma \ref{geral}.
\end{proof}

\begin{rem}\label{rem.solvable}
Note that groups with $\lambda(G)=31$ can be non-solvable. For instance, $\lambda(A_5 \times C_n)=31 = \lambda(S_5 \times C_n)=\lambda(SL(2,5) \times C_n)=31$, for every positive integer $n$ such that $n$ and $30$ are coprime numbers.  
\end{rem}

Note that the number of maximal cyclic subgroups of a noncyclic group $G$ is exactly $\lambda(G)$ (Proposition \ref{prop.basic}, above). In particular, it seems natural to ask what information about $G$ can be deduced from the number of cyclic subgroups of $G$ (see also \cite{GaLi,Tarn} for more details). Here $c(G)$  denotes the number of all cyclic subgroups of $G$. We obtain the following related result.  

\begin{corollary} \label{cor.31}
Let $G$ be a group with $c(G) \leqslant 31$. Then $G$ is solvable.
\end{corollary}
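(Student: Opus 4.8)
The plan is to reduce the statement to Theorem~D by comparing the invariant $c(G)$ with $\lambda(G)$. The guiding observation is that every maximal cyclic subgroup of $G$ is in particular a cyclic subgroup of $G$, so the family of maximal cyclic subgroups sits inside the family of all cyclic subgroups that $c(G)$ counts. Thus I expect a clean inequality $\lambda(G) \leqslant c(G)$, and with a little care an even sharper one, which will push the hypothesis $c(G) \leqslant 31$ into the range $\lambda(G) \leqslant 30$ where Theorem~D applies.

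First I would dispose of the cyclic case: if $G$ is cyclic then it is abelian, hence solvable, and there is nothing to prove. So I may assume that $G$ is noncyclic (and in particular nontrivial). In that setting, Proposition~\ref{prop.basic} tells us that the number of maximal cyclic subgroups of $G$ is exactly $\lambda(G)$. Each such subgroup is cyclic and nontrivial, whereas the trivial subgroup $\{1\}$ is a cyclic subgroup that is never maximal cyclic, since $\{1\} < \langle x \rangle$ for any $x \neq 1$. Consequently the maximal cyclic subgroups form a subfamily of the $c(G)-1$ nontrivial cyclic subgroups of $G$, which yields $\lambda(G) \leqslant c(G) - 1 \leqslant 30$.

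Finally, applying Theorem~D to the inequality $\lambda(G) \leqslant 30$ immediately gives that $G$ is solvable, completing the argument. I do not anticipate any genuine obstacle in this proof; the only subtle point is the off-by-one improvement coming from the trivial subgroup, and this is precisely what allows us to conclude $\lambda(G) \leqslant 30$ rather than the weaker $\lambda(G) \leqslant 31$, thereby landing inside the hypothesis of Theorem~D.
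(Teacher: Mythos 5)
Your proof is correct and follows essentially the same route as the paper's: reduce to the noncyclic case, note that the maximal cyclic subgroups give $\lambda(G) < c(G)$, hence $\lambda(G) \leqslant 30$, and invoke Theorem~D. Your explicit justification of the strict inequality via the trivial subgroup (which is cyclic but never maximal cyclic in a nontrivial group) simply makes precise the step the paper states without comment.
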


\begin{proof}
There is no loss of generality in assuming that $G$ is not cyclic. In particular, $\lambda(G) < c(G)$. Since $c(G)\leqslant 31$, it follows that $\lambda(G)\leqslant 30$ and so, $G$ is solvable (Theorem D). The result follows.   
\end{proof}

Note that the above bound cannot be improved. For instance, $c(A_5)=32$. 

We conclude this paper by including another extremal variant related to the covering problem.

\begin{rem}
In the recent paper \cite{GaLi} M. Garonzi and first author studied the function $\alpha(G)=c(G)/|G|$. They obtained basic properties of $\alpha(G)$ and its asymptotic behavior. In other words, they characterized $\alpha_{\mathcal{F}}= \max\{\alpha(G)| \ G \in \mathcal{F} \}$ and $m_{\mathcal{F}}= \{G \in \mathcal{F}| \ \alpha(G)=\alpha_{\mathcal{F}} \}$, where $\mathcal{F}$ is some family of finite groups (all finite groups, non-abelian, non-nilpotent, non-solvable, non-supersolvable, $\ldots$). Note that $\lambda(G) < c(G)$ and this suggests a motivation to study a new function $\beta(G)=\lambda(G)/|G|$ whose behavior is $0 < \beta(G) < \alpha(G) \leq 1$. 
\end{rem}


\begin{thebibliography}{10}
\bibitem{AAAH} A. Abdollahi, M.\,J. Ataei, S.\,M.\, Amari and A.\,M. Hassanabadi, {\it Groups with a maximal irredundant $6$-cover}, Communications in Algebra,  \textbf{33}, (2005) 3225--3238.

\bibitem{Abdo1} A. Abdollahi and S. M. Jafarian Amiri, {\it On groups
with an irredundant 7-cover}, Journal of Pure and Applied Algebra, {\bf 209}, (2007)  291--300.

\bibitem{Abdo2} A. Abdollahi and S. M. Jafarian Amiri, {\it Minimal
coverings of completely reducible groups}, Publicationes Mathematicae Debrecen, {\bf 72},
(2008) 167--172.

\bibitem{B} M.\,A. Brodie, {\it Uniquely covered groups}, Algebra Colloquium,  \textbf{10}, (2003) 101--108. 

\bibitem{BFS} R. A. Bryce, V. Fedri and L. Serena, {\it Covering groups
with subgroups}, Bulletin Australian Mathematical Society, \textbf{55}, (1997) 469--476.

\bibitem{Bry1} R. A. Bryce and L. Serena, \newblock A note on minimal
coverings of groups by subgroups, Special issue on group theory,
Journal of Australian Mathematical Society, {\bf 71}, (2001) 159--168.

\bibitem{Cohn} J. H. E. Cohn, {\it On n-sum groups}, Mathematica Scandinavica, \textbf{75}, (1994) 44--58.

\bibitem{GAP} The GAP Group, GAP - \textit{Groups, Algorithms, and Programming}, Version 4.8.10; 2018. (https://www.gap-system.org)

\bibitem{GL} M. Garonzi and A. Lucchini, {\it Irredundant and Minimal Covers of Finite Groups}, Communications in Algebra, {\bf 44}, (2016) 1722--1727.  

\bibitem{GaLi} M. Garonzi and I. Lima, {\it On the number of cyclic subgroups of a finite group}, Bulletin of the Brazilian Mathematical Society, New Series. (2018). https://doi.org/10.1007/s00574-018-0068-x (to appear).

\bibitem{JR} S.\,O. Juriaans and J.\,R. Rog\'erio, {\it On groups whose maximal cyclic subgroups are maximal}, Algebra Colloquium \textbf{17}, (2010) 223--227.

\bibitem{Marioti} A. Mar\'{o}ti, {\it  Covering the symmetric groups with proper subgroups}, Journal of Combinatorial Theory, Series A, {\bf 110}, (2005) 97--111.

\bibitem{PK} V.\,V. Pylaev and N.\,F. Kuzennyi, {\it Finite groups with a cyclic maximal subgroup}, Ukrainian Mathematical Journal, \textbf{28}, (1976) 500--506. 

\bibitem{Rob} D.\,J.\,S. Robinson,
\textit{A course in the theory of groups}, 2nd edition, Springer-Verlag, New York, 1996.

\bibitem{JRR} J.\,R. Rog\'erio, {\it A Note on Maximal Coverings of Groups}, Communications in Algebra, \textbf{42}, (2014) 4498--4508.

\bibitem{S} G. Scorza,
{\it Gruppi che possono pensarsi come somma di tre sottogruppi}, Bollettino della Unione Matematica Italiana, \textbf{5}, 
(1926) 216--218.

\bibitem{Tarn} M. T\u{a}rn\u{a}uceanu, Finite groups with a certain number of cyclic subgroups, Amer. Math. Monthly, {\bf 122}, (2015) 275--276.

\bibitem{T} M.\,J. Tomkinson, {\it Groups covered by finitely many coset or subgroups}, Communications in Algebra, \textbf{15}, (1987) 845--859.

\bibitem{Tomk1} M. J. Tomkinson, {\it Groups as the union of proper
subgroups}, Mathematica Scandinavica, {\bf 81}, (1997) 189--198.

\bibitem{JZhang} J. Zhang, {\it Finite groups as the union of proper
subgroups}, Serdica Mathematical Journal,  {\bf 32}, (2006) 259--268.

\bibitem{Zhang} G. Zhang, \textit{On self-normalizing cyclic subgroups}. Journal of Algebra, {\bf 127}, (1989) 255--258.
\end{thebibliography}
\end{document}